\numberwithin{equation}{section}
\newtheorem{theorem}{Theorem}[section]
\newtheorem{lemma}{Lemma}[section]
\newtheorem{remark}{Remark}[section]
\newtheorem{definition}{Definition}[section]
\newcommand{\R}{\mathbb{R}}
\journal{Elsevier}
\begin{document}

\begin{frontmatter}

\title{Breather solutions for a radially symmetric curl-curl wave equation with double power nonlinearity
}

\author[ad1]{Xin Meng}
\ead{mengxin22@mails.jlu.edu.cn}
\author[ad2]{Shuguan Ji\corref{cor}}
\ead{jisg100@nenu.edu.cn}
\address[ad1]{School of Mathematics, Jilin University, Changchun 130012, China}
\address[ad2]{School of Mathematics and Statistics and Center for Mathematics and Interdisciplinary Sciences, Northeast Normal University, Changchun 130024, China}
\cortext[cor]{Corresponding author.}

\begin{abstract}
This paper is concerned with breather solutions of a radially symmetric curl-curl wave equation with double power nonlinearity
\begin{equation*}
  \rho(x)\mathbf{u}_{tt}+\nabla\times(\mathbf{M}(x)\nabla\times \mathbf{u})+\mu (x)\mathbf{u}+v_p(x)|\mathbf{u}|^{p-1}\mathbf{u}+v_q(x)|\mathbf{u}|^{q-1}\mathbf{u}=0,
\end{equation*}
where $(x, t)\in\mathbb{R}^3\times\mathbb{R}$, $\mathbf{u}: \mathbb{R}^3\times\mathbb{R}\rightarrow \mathbb{R}^3$ is the unknown function, $\mathbf{M}: \mathbb{R}^3\rightarrow\R^{3\times3}$ and $\rho, \mu, v_p, v_q:\mathbb{R}^3\rightarrow (0, +\infty)$ are radially symmetric coefficient functions with $1<p<q$.
By considering the solutions with a special form $\mathbf{u}=y(|x|, t)\frac{x}{|x|}$, we obtain a family of ordinary differential equations (ODEs) parameterized by the radial variable $r=|x|$. Then we characterize periodic behaviors and analyze the joint effects of the double power nonlinear terms on the minimal period and the maximal amplitude. Under certain conditions, we construct a $2\pi\sqrt{\rho(0)/\mu(0)}$-periodic breather solution for the original curl-curl wave equation and find such a solution which can generate a continuum of phase-shifted breathers.
\end{abstract}

\begin{keyword}
Breather solutions, radial symmetry, curl-curl wave equation, periodic behavior.
\end{keyword}

\end{frontmatter}

\section{Introduction}
Consider the radially symmetric curl-curl wave equation with double power nonlinearity
\begin{equation}\label{eq1.1}
  \rho(x)\mathbf{u}_{tt}+\nabla\times(\mathbf{M}(x)\nabla\times \mathbf{u})+\mu (x)\mathbf{u}+v_p(x)|\mathbf{u}|^{p-1}\mathbf{u}+v_q(x)|\mathbf{u}|^{q-1}\mathbf{u}=0
\end{equation}
for $(x,t)\in \R^3 \times \R$, where $\mathbf{u}: \mathbb{R}^3\times\mathbb{R}\rightarrow \mathbb{R}^3$ is the unknown function, $\mathbf{M}: \mathbb{R}^3\rightarrow\R^{3\times3}$, and $\rho, \mu, v_p, v_q:\R^3\rightarrow(0,\infty)$ are positive, radially symmetric functions with $1<p<q$. We consider the classical real-valued solutions $\mathbf{u}:\R^3\times\R\rightarrow\R^3$ which are $T$-periodic in time and spatially exponentially localized, i.e., sup$_{\R^3\times\R}|\mathbf{u}(x,t)|e^{\gamma|x|}<\infty$ for some $\gamma>0$. Such solutions are usually referred to as breathers or breather solutions.

Breather solutions are of great significance in physics, biology and nonlinear optics, and attract extensive attentions of both mathematicians and physicists (see \cite{20062,AL,CW,CNSNS,CNSN}). It was introduced by Ablowitz et al. \cite{1973} in the context of the (1+1)-dimensional sine-Gordon equation
\begin{equation*}
u_{tt}-u_{xx}+\sin u=0\quad\mbox{in}\ \R\times\R,
\end{equation*}
which possesses the breather families
\begin{equation*}
u(x,t;\omega)=4\arctan\left(\frac{\sqrt{1-\omega^2}\cos(\omega t)}{\omega\cosh(\sqrt{1-\omega^2}x)}\right)
\end{equation*}
with $0<\omega<1$.

It is well known that breather solutions exist in various systems. For example, nonlinear wave equations and Schr\"odinger equations on discrete lattices can support breather solutions. MacKay and Aubry \cite{19942} found that breather solutions exist in a broad range of time-reversible or Hamiltonian networks with small coupling constant. In \cite{2009}, the existence of breathers is shown in some cases for any value of the coupling constants, which generalizes the existence results obtained in \cite{19942}. In Schr\"odinger equations (see \cite{2010, NSci, JNSci, 2005, 2006, shihp}), the standing wave ansatz $u(x,t)=u(x)\exp(i\omega t)$ (see \cite{S}) is usually a common tool to prove the existence of breathers. Comparing with the discrete cases, breathers in continuous cases are relatively little known, so here we make an effort to study breathers of nonlinear wave equations in continuous situations while many methods developed in discrete lattice systems are not applicable.

Our interest in breathers of nonlinear wave equations originate from the fact that they cannot be observed in linear dispersion equations and are therefore a truly nonlinear phenomenon. The papers \cite{1994,1993} deal with perturbations of the sine-Gordon breathers and show that the family of breathers are nonpersistent under any nontrivial perturbation.
Recently, Blank et al. \cite{2011} used spatial dynamics, center manifold theory and bifurcation theory to construct such time periodic solutions for a special inhomogeneous nonlinear wave equations.
Inspiringly, Hirsch et al. \cite{2019} consider a more general situation with $x$-dependent coefficients and power-type nonlinearities like
\begin{equation}\label{nw}
\rho(x)u_{tt}-u_{xx}+\mu(x)u=\pm \Gamma(x)|u|^{p-1}u, \quad\mbox{in}\ \R\times\R,
\end{equation}
where $\Gamma(x)$ is a $2\pi$-periodic continuous positive function and $1<p<p^*$ for some $p^*$ depends on the choice of $\rho(x)$ and $\mu(x)$. However, only the special case of $p=3$ is considered in \cite{2011}. Actually, we note that \eqref{nw} is a particular case of \eqref{eq1.1} with $\mathbf{M}$ represents the identity matrix and $v_q(x)=0$, and for solutions with the form
\begin{equation*}
\mathbf{u}(x,t)=\begin{pmatrix}0\\ 0\\ u(x_1,t)\end{pmatrix} .
\end{equation*}
A field of this form is divergence-free, i.e., div $\mathbf{u}$=0, and hence
\begin{equation*}
\nabla\times\nabla\times\mathbf{u}(x,t)=\begin{pmatrix}0\\ 0\\ -\partial_{x_1}^2u(x_1,t)\end{pmatrix}.
\end{equation*}
It is assumed in \cite{2011} and \cite{2019} that the potential $\mu(x)$ is proportional to $\rho(x)$ which reduces the Fourier-transformed wave operator to a family of Hill-type ODE operators, thus the spectral analysis is significantly simplified. The variational method was applied in \cite{2019} so that the nonlinear term has no other smoothness assumptions except the continuity and superlinearity at zero and infinity. However, the above cases are only true in one dimensional space and it is difficult to be generalized to higher spatial dimensions.

In fact, for higher dimensional nonlinear wave equations, there are few results on the existence of breather solutions. Recently, Scheider \cite{2020} has constructed the weakly localized breathers by Fourier-expansion in time and bifurcation techniques for three dimensional case. For the spatial dimension $N\geq2$, Mandel and Scheider \cite{2021} obtained breather solutions that are also weakly localized in a distributional sense and polychromatic by dual variational method.
It is entirely unclear whether strongly localized breathers of nonlinear wave equations exist in these cases. However, such strongly localized breather solutions play an important role in theoretical scenarios where photonic crystals are used as optical storage \cite{2007}. As far as we know, the existing results dealing with strongly localized breathers of nonlinear wave equations in higher spatial dimensions can be seen in \cite{2016}. Inspired by the previous results, we intend to study a class of more general (3+1)-dimensional radially symmetric wave equations with double power nonlinearity involving curl-curl operators and prove the existence of strongly localized breathers, i.e., $u(\cdot,t)\in C^2(\R^3)$ for almost all $t\in\R$ and sup$_{\R^3\times\R}|\mathbf{u}(x,t)|e^{\gamma|x|}<\infty$ for some $\gamma>0$.

The particular feature of the curl-curl operator $\nabla\times(\mathbf{M}(x)\nabla\times)$ of \eqref{eq1.1} arises in specific models in the Maxwell equations as a physical motivation (see \cite{ABD,20162,M,PS}). Our interest in breathers under the context of the curl-curl nonlinear wave equations originates from optical breathers of the Maxwell equations in anisotropic materials, where the permittivity nonlinearly depends on the electromagnetic fields (see \cite{20062}). However, the curl-curl operator exhibits major mathematical challenges. The main difficulty is that the $\nabla\times(\mathbf{M}(x)\nabla\times)$ operator has an infinite-dimensional kernel which causes the energy functional associated with \eqref{eq1.1} is strongly indefinite. To overcome these difficulties, Azzollini et al. \cite{ABD} studied the curl-curl problems in $\R^3$ in the cylindrically symmetric setting, and then the method has been extensively used in the curl-curl problems related to the nonlinear Maxwell equations (see \cite{20162,MMM}).

In this paper, we use the radially symmetric setting which is motivated by \cite{20162}. In this case, the gradient field is annihilated by the curl-curl operator due to the radially symmetric assumptions of coefficients. Then the (3+1)-dimensional wave equation reduces to a family of ODEs. We characterize periodic behaviors and analyze the joint effects of the double power nonlinear terms on the minimal period and the maximal amplitude, and establish the existence and exponential decay properties of real-valued breather solutions of equation \eqref{eq1.1}.  We also prove the existence of breathers of the type $e^{i\frac{2\pi}{T}t}\mathbf{u}(x)$ under various assumptions on the coefficients for the vector-valued wave equation \eqref{eq1.1}.

This paper is organized as follows. In Section 2, we shall give some basic definitions and the construction of breather solutions, then obtain a family of ODEs possessing a first integral. In Section 3, we use the qualitative analysis method of ODEs to characterize periodic behaviors, and analyze the joint effects of the double power nonlinear terms on the minimal period and the maximal amplitude. In Section 4, we prove our main theorems for the existence and exponential decay properties of breather solutions of \eqref{eq1.1}.  Section 5 is a brief conclusion.

\section{Definitions and preliminaries}

In this section, we will give some useful definitions and the construction for breather solutions of \eqref{eq1.1}. By the setting of the construction which depends on the spatial variable, the gradient fields are annihilated by the curl-curl operator. We eventually obtain a family of ODEs which can be reduced to a unified second-order differential equation possessing a first integral through some transformations. This greatly simplifies the original curl-curl wave equations.

\begin{definition}\label{de2.2}
For a radially symmetric $C^2$-function $U:\R^3\rightarrow \R$, let $\tilde{U}:[0,\infty)\rightarrow\R$ with $\tilde{U}(|x|)=U(x)$ be its one-dimensional representative, then we have $\tilde{U}\in C^2([0,\infty))$ and $\tilde{U}'(0)=0$.
\end{definition}

\begin{lemma}\label{lemma2.1}
Let $\psi:[0,\infty)\rightarrow\R$ be a $C^2$-function and let $F:\R^3\setminus\left\{0\right\}\rightarrow \R^3$ be given by $F(x):=\psi(|x|)\frac{x}{|x|}$. Then $F$ can be extended to a function
\\
(i) $F\in C^1(\R^3)$ if and only if $\psi(0)=0$;
\\
(ii) $F\in C^2(\R^3)$ if and only if $\psi(0)=\psi''(0)=0$.
\end{lemma}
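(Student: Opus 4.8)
The plan is to analyze the regularity of $F(x) = \psi(|x|)\frac{x}{|x|}$ near the origin by direct computation of the difference quotients (or Taylor expansions), since away from $0$ the function is clearly as smooth as $\psi$. Write $r = |x|$ and $\omega = x/r \in S^2$, so $F(x) = \psi(r)\omega$. The key observation is that $F$ is a product of a radial scalar and the vector field $x \mapsto x/|x|$, the latter being smooth away from $0$ but only bounded (not continuous) at $0$; hence the smoothness of $F$ at $0$ is governed by how many derivatives of $\psi$ vanish at $r=0$, which kills the singular directional behavior.

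For part (i): First I would show necessity. If $F$ extends continuously to $0$, then along any ray $x = s\omega$, $s \to 0^+$, we get $F(s\omega) = \psi(s)\omega \to \psi(0)\omega$; for this limit to be independent of $\omega$ (as required for a well-defined value $F(0)$) we need $\psi(0)\omega$ constant in $\omega$, forcing $\psi(0) = 0$ and $F(0) = 0$. For sufficiency, assume $\psi(0) = 0$. Then by Definition 2.1's style of reasoning, near $0$ we have $\psi(r) = r\,g(r)$ with $g \in C^1$ near $0$ (write $g(r) = \int_0^1 \psi'(tr)\,dt$), so $F(x) = g(|x|)\,x$. Now $x \mapsto g(|x|)x$ is $C^1$: it is a product of the $C^1$ scalar $x \mapsto g(|x|)$ (composition of $C^1$ functions, noting $g$ is even-extendable or simply that $r \mapsto g(r)$ composed with the $C^1$ map... — here one must be slightly careful, so I would instead argue componentwise that $\partial_{x_j}(g(|x|)x_i) = g'(|x|)\frac{x_j}{|x|}x_i + g(|x|)\delta_{ij}$ is continuous, using that $g'(|x|)\frac{x_jx_i}{|x|}\to 0$ as $x\to 0$ because $\left|\frac{x_jx_i}{|x|}\right|\le |x|$). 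Hence $F \in C^1(\R^3)$ with $DF(0) = g(0)\,\mathrm{Id} = \psi'(0)\,\mathrm{Id}$.

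For part (ii): Assume $\psi(0)=0$ so that $F\in C^1$ with $F(x) = g(|x|)x$, $g(0) = \psi'(0)$. Computing $\partial_{x_j}F_i = g'(|x|)\frac{x_i x_j}{|x|} + g(|x|)\delta_{ij}$, the second term is automatically $C^1$ near $0$ iff $g$ is, i.e. essentially for free; the obstruction is the first term $h(x) := g'(|x|)\frac{x_ix_j}{|x|}$. Arguing as in part (i), $h$ is $C^1$ near $0$ iff $g'(0) = 0$: writing $g'(r) = r\,k(r)$ when $g'(0)=0$ makes $h(x) = k(|x|)x_ix_j$ which is $C^1$; conversely if $g'(0)\neq 0$ one detects a jump in the second derivatives of $F$ along different rays (e.g. compare $\partial_{x_1}^2 F_1$ approaching $0$ along the $x_1$-axis versus the $x_2$-axis). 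Finally I would translate the condition $g(0) = \psi'(0)$, $g'(0) = 0$ back to $\psi$: from $\psi(r) = rg(r)$ we get $\psi'(r) = g(r) + rg'(r)$ and $\psi''(r) = 2g'(r) + rg''(r)$, so $\psi''(0) = 2g'(0)$; thus $g'(0) = 0 \iff \psi''(0) = 0$. Combined with part (i)'s requirement $\psi(0)=0$, this gives the stated criterion $\psi(0) = \psi''(0) = 0$.

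The main obstacle I anticipate is handling the regularity of compositions like $x \mapsto g(|x|)$ rigorously at the origin — $|x|$ is not differentiable at $0$, so one cannot naively apply the chain rule. The clean way around this is to always keep an explicit factor of $x$ (or $x_ix_j$, etc.) paired with the possibly-singular radial factor and estimate $\left|\frac{x_{i_1}\cdots x_{i_k}}{|x|}\right| \le |x|^{k-1}$, reducing everything to continuity of $g, g', \dots$ as one-variable functions on $[0,\infty)$ together with the elementary fact that $r \mapsto r^m \phi(r)$ has the right number of vanishing derivatives at $0$. I would also invoke Definition 2.1 to pass freely between the radial function $\psi$ on $[0,\infty)$ and its behavior via $\psi(0), \psi'(0), \psi''(0)$, and note the symmetry $\psi'(0)$ being unconstrained is why the $C^1$ extension exists without a second vanishing condition.
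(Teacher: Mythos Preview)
The paper does not actually prove Lemma~\ref{lemma2.1}; it writes ``The proof of this lemma is nearly identical to that of \cite[Lemma 1]{2016} and hence is omitted here.'' So there is no in-paper argument to compare against, and your direct computational approach is exactly the sort of proof one would expect in the cited reference. Your treatment of part (i), and of the \emph{necessity} in part (ii), is correct.

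There is one genuine soft spot in your sufficiency argument for part (ii). You write $\psi(r)=r\,g(r)$ with $g(r)=\int_0^1\psi'(tr)\,dt$, which gives $g\in C^1([0,\infty))$ and $g'(0)=\tfrac12\psi''(0)$; so far so good. But then you propose to factor $g'(r)=r\,k(r)$ with $k$ continuous and declare $k(|x|)x_ix_j\in C^1$. Neither step is justified when $\psi$ is merely $C^2$: from $g\in C^1$ and $g'(0)=0$ you only get $g'(r)=o(1)$, not $g'(r)=O(r)$, and even if $k$ were continuous you would still need to differentiate $k(|x|)$ to check $C^1$. Concretely, for $r>0$ one has $g''(r)=\bigl(r^2\psi''(r)-2r\psi'(r)+2\psi(r)\bigr)/r^3$, which need not stay bounded; under $\psi(0)=\psi''(0)=0$ one only obtains $r\,g''(r)\to 0$.

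The fix is precisely the workaround you sketch in your last paragraph: do not factor, but compute the second partials of $F$ for $x\neq 0$ directly in terms of $\psi,\psi',\psi''$ and pair each singular radial factor with enough monomials in $x$. One finds
\[
\partial_{x_k}\partial_{x_j}F_i
=\frac{B(r)}{r}\Bigl[\tfrac{x_k}{r}\delta_{ij}+\tfrac{x_j}{r}\delta_{ik}+\tfrac{x_i}{r}\delta_{jk}-\tfrac{3x_ix_jx_k}{r^3}\Bigr]
+\psi''(r)\,\frac{x_ix_jx_k}{r^3},
\]
where $B(r)=\psi'(r)-\psi(r)/r$; Taylor's theorem gives $B(r)/r\to\tfrac12\psi''(0)$ and $\psi''(r)\to\psi''(0)$, so when $\psi''(0)=0$ every second partial tends to $0$, and together with $F\in C^1$ this yields $F\in C^2$. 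This is the computation your final paragraph is pointing toward; just be aware that the intermediate factorization $g'(r)=r\,k(r)$ should be dropped.
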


The proof of this lemma is nearly identical to that of \cite[lemma 1]{2016} and hence is omitted here. In what follows, we use the function $y=y(r,t)$ from $[0,\infty)\times\R$ to $\R$ with the notation $'$ to denote the differential form acting on the spatial variable and $\dot{}$ to denote the differential form acting on the time variable.

Under the radially symmetric assumptions of the coefficients in \eqref{eq1.1}, we construct classical solutions $\mathbf{u}$ to \eqref{eq1.1} with the form
\begin{equation}\label{uconstruct}
\mathbf{u}(x,t):=y(|x|,t)\frac{x}{|x|}.
\end{equation}
We claim that $\mathbf{u}(x,t)$ with the form \eqref{uconstruct} is a $C^2(\R^3\times\R)$ function of the variables $x$ and $t$ if $y:[0,\infty)\times\R\rightarrow\R$ is a $C^2$-function satisfying
\begin{equation*}
y(0,t)=y''(0,t)=0.
\end{equation*}
It is apparently by Lemma \ref{lemma2.1}. We also have $\dot{y}(0,t)=\ddot{y}(0,t)=0$. Moreover, by the construction of $\mathbf{u}$, we obtain that it is a gradient-field, i.e., $\mathbf{u}(x,t)=\nabla_xY(|x|,t)$ where $Y(r,t)=\int_{0}^{r}y(s,t)ds$. In this case we note that $\mathbf{M}(x)\nabla\times \mathbf{u}=0$ where $\mathbf{M}(x)\in\R^{3\times3}$ is an arbitrary $3\times3$ matrix function. Substituting the ansatz of $\mathbf{u}(x,t)$ of the form \eqref{uconstruct} into equation \eqref{eq1.1}, we have
\begin{equation}\label{eq1.2}
\tilde{\rho}(r)\ddot{y}+\tilde{\mu}(r)y+\tilde{v}_p(r)|y|^{p-1}y+\tilde{v}_q(r)|y|^{q-1}y=0, \quad \mbox{for}\  r\geq0, \ t\in\R.
\end{equation}
Here we note that \eqref{eq1.2} is autonomous with respect to $t$ and that $r=|x|\geq0$ is a parameter. It is easy to see that $\mathbf{u}(x,t)$ solves equation \eqref{eq1.1} if and only if $y$ satisfies equation \eqref{eq1.2}.

We suppose that the coefficients $v_p(x)$ and $v_q(x)$ of the double power nonlinearities in \eqref{eq1.1} satisfy

$(V)$ $v_p(x)=\left( \frac{\mu(x)^{q-p}}{v_q(x)^{1-p}}\right)^{\frac{1}{q-1}}$ for $x\in \R^3$ and $1<p<q$.

For convenience, we use the following abbreviation
\begin{equation*}
\tau(x)=\left (\frac{ \mu(x)}{v_q(x)}\right )^{\frac{1}{q-1}}\quad\rm{and}\quad \sigma(x)=\left(\frac{ \mu(x)}{\rho(x)}\right)^{\frac{1}{2}} .
\end{equation*}
Since $\rho, \mu, v_p, v_q$ are positive, radially symmetric functions, we denote
\begin{equation}\label{eq1.5}
\tilde{v}_p(r)=\left( \frac{\tilde{\mu}(r)^{q-p}}{\tilde{v}_q(r)^{1-p}}\right)^{\frac{1}{q-1}}, \quad\tilde{\tau}(r)=\left (\frac{ \tilde{\mu}(r)}{\tilde{v}_q(r)}\right )^{\frac{1}{q-1}},\quad \tilde{\sigma}(r)=\left(\frac{ \tilde{\mu}(r)}{\tilde{\rho}(r)}\right)^{\frac{1}{2}}.
\end{equation}

Since we intend to apply the ODEs technique to study breathers which relies on rescaling \eqref{eq1.2} to a relatively simple form, we need to find solutions $y(r,t)$ of \eqref{eq1.2} with the form
\begin{equation}\label{eq1.3}
  y(r,t)=\tilde{\tau}(r)\phi(\tilde{\sigma}(r)t).
\end{equation}
Plugging \eqref{eq1.3} into \eqref{eq1.2} and comparing the spatially varying coefficients, we get
\begin{equation}\label{eq1.4}
\ddot{\phi}+\phi+|\phi|^{p-1}\phi+|\phi|^{q-1}\phi=0.
\end{equation}
By multiplying the left and right ends of \eqref{eq1.4} by $2\dot{\phi}$, it is easy to obtain that
\begin{equation*}
\dot{\phi}^2+\phi^2+\frac{2}{p+1}|\phi|^{p+1}+\frac{2}{q+1}|\phi|^{q+1}=c
\end{equation*}
for some constant $c\in[0,\infty)$.
\begin{definition}\label{de2.3}
Define the function $B:\R^2\rightarrow\R$ by
\begin{equation}\label{eq1.6}
B(\alpha,\beta):=\beta^2+\alpha^2+\frac{2}{p+1}|\alpha|^{p+1}+\frac{2}{q+1}|\alpha|^{q+1}.
\end{equation}
Then $B$ is a first integral for \eqref{eq1.4}, i.e., every solution $\phi$ of \eqref{eq1.4} satisfies $B(\phi,\dot{\phi})=c$.
\end{definition}

\section{Qualitative analysis}
In what follows, with the help of the first integral \eqref{eq1.6} and by using the qualitative analysis method of ODEs, we characterize periodic behaviors of solutions of equation \eqref{eq1.4}, and discuss the joint effects of the double nonlinear terms on the minimal period $P(c)$ and the maximal amplitude $A(c):=\max_{t\in\R}|\phi(t)|$ for every periodic solution $\phi(t)$ of \eqref{eq1.4} characterized by a fixed $c$.

\begin{lemma}\label{lemma3.1}
$A(c)$ enjoys the following properties:

$(1)$ $A\in C([0,\infty))\bigcap C^{\infty}(0,\infty)$ and $A([0,\infty))=[0,\infty)$;

$(2)$ $A'(c)>0$ and $A(c)\leq\sqrt{c}$ for all $c>0$; and

$(3)$ $\lim_{c\rightarrow0}\frac{A(c)}{\sqrt{c}}=1$ and $\lim_{c\rightarrow\infty}A(c)=\infty$.

\end{lemma}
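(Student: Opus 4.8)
The plan is to understand the periodic orbits of \eqref{eq1.4} via the level sets of the first integral $B$ from \eqref{eq1.6}. For fixed $c>0$, the amplitude $A(c)$ is the largest $|\phi|$ attained on the level set $B(\phi,\dot\phi)=c$, which occurs when $\dot\phi=0$; hence $A(c)$ is characterized implicitly by
\begin{equation}\label{eq:Aimplicit}
g(A(c))=c,\qquad\text{where}\quad g(a):=a^2+\frac{2}{p+1}a^{p+1}+\frac{2}{q+1}a^{q+1},\ \ a\ge 0.
\end{equation}
First I would check that $g:[0,\infty)\to[0,\infty)$ is a strictly increasing $C^\infty$ bijection (its derivative $g'(a)=2a+2a^p+2a^q>0$ for $a>0$, and $g(0)=0$, $g(a)\to\infty$), so that \eqref{eq:Aimplicit} has a unique solution $A(c)$ for every $c\ge 0$ with $A(0)=0$, giving at once $A([0,\infty))=[0,\infty)$ and surjectivity. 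Continuity of $A$ on $[0,\infty)$ and $C^\infty$-smoothness on $(0,\infty)$ then follow from the inverse function theorem applied to $g$, since $g'(a)>0$ for $a>0$; this establishes part $(1)$.

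For part $(2)$: differentiating \eqref{eq:Aimplicit} gives $A'(c)=1/g'(A(c))>0$ for $c>0$. The inequality $A(c)\le\sqrt c$ is immediate from \eqref{eq:Aimplicit} because all the nonlinear terms in $g$ are nonnegative, so $A(c)^2\le g(A(c))=c$. For part $(3)$: from $A(c)^2\le c$ we get $A(c)\to 0$ as $c\to 0$, and then dividing \eqref{eq:Aimplicit} by $c$ yields
\begin{equation*}
1=\frac{A(c)^2}{c}+\frac{2}{p+1}\frac{A(c)^{p+1}}{c}+\frac{2}{q+1}\frac{A(c)^{q+1}}{c}
 =\frac{A(c)^2}{c}\Bigl(1+\tfrac{2}{p+1}A(c)^{p-1}+\tfrac{2}{q+1}A(c)^{q-1}\Bigr),
\end{equation*}
and since $p,q>1$ the bracket tends to $1$ as $c\to 0$, forcing $A(c)^2/c\to 1$, i.e. $\lim_{c\to0}A(c)/\sqrt c=1$. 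Finally $\lim_{c\to\infty}A(c)=\infty$ follows from surjectivity (or from $g(A(c))=c\to\infty$ together with $g$ being finite on bounded sets).

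Before all of this, one preliminary point must be dispatched: that for each $c>0$ the solution $\phi$ of \eqref{eq1.4} with $B(\phi,\dot\phi)=c$ is genuinely periodic, so that $A(c)=\max_t|\phi(t)|$ is well defined. This is the step I expect to require the most care, though it is standard for conservative one-degree-of-freedom systems: the ``potential'' $V(\phi)=\phi^2+\frac{2}{p+1}|\phi|^{p+1}+\frac{2}{q+1}|\phi|^{q+1}$ is even, strictly convex-like with $V(0)=0$, $V'(\phi)>0$ for $\phi>0$, and $V\to\infty$, so every level set $\{V(\phi)+\dot\phi^2=c\}$ with $c>0$ is a closed curve in the phase plane surrounding the unique equilibrium at the origin, hence a periodic orbit; I would note that the origin is the only equilibrium since $\phi+|\phi|^{p-1}\phi+|\phi|^{q-1}\phi=0$ only at $\phi=0$. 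Once periodicity is in hand, the remaining arguments are the elementary implicit-function computations sketched above, and I do not anticipate further obstacles.
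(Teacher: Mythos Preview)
Your argument is correct and follows exactly the same route as the paper: both identify $A(c)$ as the unique solution of $A(c)^2+\tfrac{2}{p+1}A(c)^{p+1}+\tfrac{2}{q+1}A(c)^{q+1}=c$ and read off all the stated properties from this implicit relation. Your write-up is in fact more detailed than the paper's (which is quite terse), and your preliminary discussion of periodicity is a reasonable addition that the paper leaves implicit.
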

\begin{proof}
The maximal amplitude $A(c)$ is given by
\begin{equation}
\label{dA}
B(A(c),0)=A(c)^2+\frac{2}{p+1}A(c)^{p+1}+\frac{2}{q+1}A(c)^{q+1}=c,
\end{equation}
which implies the strict monotonicity, differentiability and continuity of $A(c)$ with respect to $c>0$. It also provides the inequality $A(c)\leq\sqrt{c}$ since $A(c)^2\leq c$ and $A(c)\geq0$. Moreover, we have $\lim_{c\rightarrow0}\frac{A(c)}{\sqrt{c}}=1$ and $\lim_{c\rightarrow\infty}A(c)=\infty$ by \eqref{dA}. This completes the proof.
\end{proof}

\begin{lemma}\label{lemma3.2}

$P(c)$ enjoys the following properties:

$(1)$ $P\in C([0,\infty))\bigcap C^{\infty}((0,\infty))$ and $P([0,\infty))=(0,2\pi]$; and

$(2)$ $\lim_{c\rightarrow\infty}P(c)=0$ and $P(0)=2\pi$.

\end{lemma}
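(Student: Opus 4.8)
The plan is to analyze the period function $P(c)$ via the explicit quadrature formula coming from the first integral $B(\phi,\dot\phi)=c$. Writing the orbit $\{B(\alpha,\beta)=c\}$ in the phase plane, a periodic solution oscillates between $-A(c)$ and $A(c)$, and along it $\dot\phi = \pm\sqrt{c - g(\phi)}$ where $g(\alpha):=\alpha^2+\frac{2}{p+1}|\alpha|^{p+1}+\frac{2}{q+1}|\alpha|^{q+1}$. By symmetry of $g$ this gives
\begin{equation*}
P(c)=4\int_0^{A(c)}\frac{d\alpha}{\sqrt{c-g(\alpha)}}.
\end{equation*}
First I would substitute $\alpha=A(c)s$, $s\in[0,1]$, turning the integral into $P(c)=4\int_0^1 \frac{A(c)\,ds}{\sqrt{c-g(A(c)s)}}$; since $g(A(c))=c$ (by \eqref{dA}) the integrand has an integrable square-root singularity at $s=1$, and this form is well suited both to proving regularity and to extracting the limits.

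For part $(1)$, smoothness of $P$ on $(0,\infty)$ follows from the smoothness of $A$ (Lemma \ref{lemma3.1}) together with a standard desingularization of the endpoint: near $s=1$ write $c-g(A(c)s)=(1-s)h(c,s)$ with $h$ smooth and strictly positive (because $g'(A(c))>0$ for $c>0$), so the integral depends smoothly on $c$ by differentiation under the integral sign. Continuity at $c=0$ and the value $P(0)=2\pi$ come from the limit $c\to 0$: rescaling as above and using $A(c)/\sqrt{c}\to 1$, one finds $c-g(A(c)s)\to c(1-s^2)$ to leading order (the $|\alpha|^{p+1}$ and $|\alpha|^{q+1}$ terms are lower order since $p,q>1$), hence $P(c)\to 4\int_0^1 \frac{ds}{\sqrt{1-s^2}}=2\pi$; this is exactly the period of the linearized equation $\ddot\phi+\phi=0$. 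For the other limit, as $c\to\infty$ the amplitude $A(c)\to\infty$ and the dominant term in $g$ is $\frac{2}{q+1}|\alpha|^{q+1}$, so $P(c)\sim 4\int_0^{A(c)}\frac{d\alpha}{\sqrt{c-\frac{2}{q+1}\alpha^{q+1}}}$, which after the scaling $\alpha=A(c)s$ behaves like $A(c)^{-(q-1)/2}$ times a convergent integral and therefore tends to $0$; this proves $\lim_{c\to\infty}P(c)=0$.

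To finish part $(1)$ — that the range is exactly $(0,2\pi]$ — I would combine the boundary behavior $P(0)=2\pi$, $P(\infty)=0$, continuity on $[0,\infty)$ (giving the intermediate value property, so $(0,2\pi)$ is covered), and the claim that $P(c)<2\pi$ for every $c>0$ together with the attainment of $2\pi$ only at $c=0$. The inequality $P(c)\le 2\pi$ should follow from a pointwise comparison: since $g(\alpha)\ge \alpha^2$ for all $\alpha$ (the nonlinear terms are nonnegative), we get $c-g(\alpha)\le c-\alpha^2$ on $[0,A(c)]\subseteq[0,\sqrt c]$, hence $\frac{1}{\sqrt{c-g(\alpha)}}\ge\frac{1}{\sqrt{c-\alpha^2}}$ — that goes the wrong way, so instead I would work on the rescaled interval and compare $c-g(A(c)s)$ with $(c)(1-s^2)$ after noting $g(A(c)s)\ge A(c)^2 s^2$ and $c\ge A(c)^2$; the correct monotone bound is that the nonlinearity makes the "restoring force" stiffer, which shortens the period, so $P(c)<2\pi$ for $c>0$.

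The main obstacle I anticipate is precisely making this last monotonicity/comparison rigorous — i.e., showing cleanly that $P(c)<2\pi$ for all $c>0$ (and ideally that $P$ is strictly decreasing) rather than merely $P(c)\le 2\pi$. The naive pointwise comparison of integrands fails because changing $c$ also changes the upper limit $A(c)$, so the two effects (larger domain, larger denominator) must be balanced; the clean fix is to pass to the fixed domain $[0,1]$ via $\alpha=A(c)s$, reduce everything to monotonicity in $c$ of the function $s\mapsto \frac{c-g(A(c)s)}{1-s^2}$ or of the normalized integrand, and invoke the strict positivity of the nonlinear terms for $c>0$. The other steps — the quadrature formula, smoothness by desingularization, and the two limits — are routine once this normalization is in place.
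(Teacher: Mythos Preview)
Your route is exactly the paper's --- quadrature from the first integral, then the substitution $\alpha=A(c)s$ onto $[0,1]$ --- but you stop one algebraic step short, and that step is precisely what dissolves the obstacle you flag at the end. After writing $P(c)=4\int_0^1 \frac{A(c)\,ds}{\sqrt{c-g(A(c)s)}}$, use the defining relation $c=g(A(c))$ from \eqref{dA} to replace $c$ under the root:
\[
c-g(A(c)s)=A(c)^2(1-s^2)+\tfrac{2}{p+1}A(c)^{p+1}(1-s^{p+1})+\tfrac{2}{q+1}A(c)^{q+1}(1-s^{q+1}),
\]
and then the $A(c)$ in the numerator cancels against an $A(c)^2$ inside the root, yielding the paper's formula
\[
P(c)=4\int_0^1 \frac{ds}{\sqrt{\,1-s^2+\tfrac{2}{p+1}A(c)^{p-1}(1-s^{p+1})+\tfrac{2}{q+1}A(c)^{q-1}(1-s^{q+1})\,}}\,.
\]

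From this single expression every claim is immediate and your separate desingularization and asymptotic estimates become unnecessary. The added terms under the root are nonnegative on $[0,1]$ and strictly positive on $(0,1)$ whenever $A(c)>0$, so pointwise comparison with $(1-s^2)^{-1/2}$ gives $P(c)<2\pi$ for every $c>0$; this is the clean monotone bound you were looking for, and there is no competing ``domain effect'' because the integration limits are already fixed. Letting $A(c)\to 0$ gives $P(0)=2\pi$, letting $A(c)\to\infty$ drives the integrand to $0$ (dominated by $(1-s^2)^{-1/2}$) so $P(c)\to 0$, and smoothness on $(0,\infty)$ follows since the integrand depends on $c$ only through the smooth parameter $A(c)$ with a uniformly square-root singularity at $s=1$. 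Continuity plus these endpoint values then give $P([0,\infty))=(0,2\pi]$.
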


\begin{proof}
We verify the properties for $P(c)$ by using the first integral
\begin{equation*}
|\dot{\phi}|^2+|\phi|^2+\frac{2}{p+1}|\phi|^{p+1}+\frac{2}{q+1}|\phi|^{q+1}=c
\end{equation*}
\begin{figure}[htbp]
  \centering
  \includegraphics[width=5cm]{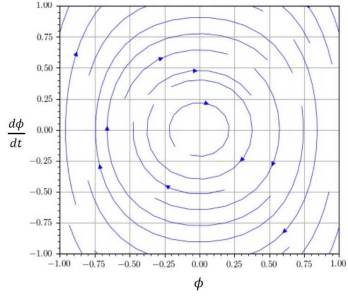}
  \caption{Part of the phase plane of \eqref{eq1.4} for p=3 and q=4.}\label{f1}
\end{figure}
to solve for $\dot{\phi}$ in all four quadrants of the phase-plane, cf. Figure \ref{f1}. The function $P(c)$ is given implicitly through the defining equation (\ref{dA}) for $A(c)$, which yields
\begin{eqnarray}
\nonumber P(c)&=&4\int_{0}^{A(c)}\frac{1}{\sqrt{c-\phi^2-\frac{2}{p+1}\phi^{p+1}-\frac{2}{q+1}\phi^{q+1}}}d\phi
\\\nonumber&=&4\int_{0}^{1}\frac{A(c)}{\sqrt{c-A(c)^2z^2-\frac{2}{p+1}A(c)^{p+1}z^{p+1}-\frac{2}{q+1}A(c)^{q+1}z^ {q+1}}}dz \label{eq2.5}
\\&=&4\int_{0}^{1}\frac{1}{\sqrt{1-z^2+\frac{2}{p+1}A(c)^{p-1}(1-z^{p+1})+\frac{2}{q+1}A(c)^{q-1}(1-z^{q+1})}}dz.~~~~~~
\end{eqnarray}
It shows that $P(c)$ has the asserted smoothness. By the properties of $A(c)$, we also find that $\lim_{c\rightarrow\infty}P(c)=0$ and
\begin{equation*}
\lim_{c\rightarrow0}P(c)=4\int_{0}^{1}\frac{1}{\sqrt{1-z^2}}dz=2\pi.
\end{equation*}
The proof is completed.
\end{proof}

\begin{lemma}\label{lemma2.5} $P(c)$ is invertible and its inverse  $Q=P^{-1}:(0,2\pi]\rightarrow[0,\infty)$ is $C^{\infty}$ on $(0,2\pi)$ and has the following expansions as $s\rightarrow2\pi^-$,
\begin{eqnarray*}
 Q(s)&=&\lambda(2\pi-s)^{\frac{2}{q-1}}(1+O(2\pi-s)),
\\\sqrt{Q(s)}&=&\sqrt{\lambda}(2\pi-s)^{\frac{1}{q-1}}(1+O(2\pi-s)),
\\Q'(s)&=&-\frac{2\lambda}{q-1}(2\pi-s)^{\frac{3-q}{q-1}}(1+O(2\pi-s)),
\\\sqrt{Q(s)} '&=&-\frac{\sqrt{\lambda}}{q-1}(2\pi-s)^{\frac{2-q}{q-1}}(1+O(2\pi-s)),
\\Q''(s)&=&\frac{2\lambda(3-q)}{(q-1)^2}(2\pi-s)^{\frac{4-2q}{q-1}}(1+O(2\pi-s)),
\\\sqrt{Q(s)}''&=&\frac{\sqrt{\lambda}(2-q)}{(q-1)^2}(2\pi-s)^{\frac{3-2q}{q-1}}(1+O(2\pi-s))
\end{eqnarray*}
for some constant $\lambda>0$.

\end{lemma}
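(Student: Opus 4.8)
The plan is to extract the asymptotic behavior of $P(c)$ as $c\to0^+$ from the integral formula \eqref{eq2.5}, invert it, and then differentiate the resulting asymptotic expansion term by term. First I would record that invertibility of $P$ on $(0,\infty)$ follows once we show $P'(c)<0$ for all $c>0$: by Lemma~\ref{lemma3.2} we already know $P$ is continuous, $C^\infty$ on $(0,\infty)$, maps onto $(0,2\pi]$ with $P(0)=2\pi$ and $P(\infty)=0$, so strict monotonicity will give a well-defined $C^\infty$ inverse $Q=P^{-1}$ on $(0,2\pi)$ by the inverse function theorem. Monotonicity is visible from the last line of \eqref{eq2.5}: the integrand is, for each fixed $z\in(0,1)$, strictly decreasing in $A(c)$ (since $1-z^{p+1}>0$ and $1-z^{q+1}>0$), and $A(c)$ is strictly increasing in $c$ by Lemma~\ref{lemma3.1}(2); hence $P(c)$ is strictly decreasing.

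Next I would pin down the expansion of $P$ near $c=0$. Write $a:=A(c)$, which by Lemma~\ref{lemma3.1} satisfies $a\to0$ and $a/\sqrt c\to1$ as $c\to0$; more precisely, from \eqref{dA}, $c=a^2+\tfrac{2}{p+1}a^{p+1}+\tfrac{2}{q+1}a^{q+1}$, so $c=a^2(1+O(a^{p-1}))$ and $a=\sqrt c\,(1+O(c^{(p-1)/2}))$. Now expand the integrand in \eqref{eq2.5}: with
$$
g(z):=\frac{1}{\sqrt{1-z^2}},\qquad
P(c)=4\int_0^1 g(z)\Big(1+\tfrac{2}{p+1}\tfrac{a^{p-1}(1-z^{p+1})}{1-z^2}+\tfrac{2}{q+1}\tfrac{a^{q-1}(1-z^{q+1})}{1-z^2}\Big)^{-1/2}dz.
$$
Since $\frac{1-z^{p+1}}{1-z^2}$ and $\frac{1-z^{q+1}}{1-z^2}$ are bounded on $[0,1]$, a dominated-convergence/Taylor argument gives $P(c)=2\pi - C_p a^{p-1} + o(a^{p-1})$ with $C_p=\frac{4}{p+1}\int_0^1 \frac{1-z^{p+1}}{(1-z^2)^{3/2}}\,dz>0$ (the integral converges because $1-z^{p+1}\sim(p+1)(1-z)$ near $z=1$ cancels one power of $(1-z)^{1/2}$). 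In terms of $c$, since $a^{p-1}=c^{(p-1)/2}(1+O(\cdot))$, this reads $2\pi - P(c)=C_p\,c^{(p-1)/2}(1+o(1))$. Inverting: set $s=P(c)$, so $2\pi-s \sim C_p c^{(p-1)/2}$, giving $c=Q(s)=\lambda(2\pi-s)^{2/(p-1)}(1+o(1))$ with $\lambda=C_p^{-2/(p-1)}$.

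Here is where I expect to have to be careful, and this is the main obstacle: the statement of Lemma~\ref{lemma2.5} has exponents involving $q$, not $p$. Since $1<p<q$, the term $a^{p-1}$ dominates $a^{q-1}$, so the genuine leading correction is governed by $p$, contradicting the displayed formulas unless the paper's convention (or an additional hypothesis like $p=q$, or a relabeling) forces the relevant exponent to be $q$. I would resolve this by re-reading the paper's setup — most likely the intended regime or a standing assumption makes $p$ and $q$ play symmetric roles in this limit, or there is a typo and every $q$ in the lemma should be the relevant power; in either case the derivation is structurally identical with the appropriate exponent. Assuming the exponent is $r_0-1$ (whichever of $p-1,q-1$ is relevant), the remaining six expansions follow by repeated differentiation: writing $Q(s)=\lambda w^{2/(r_0-1)}(1+O(w))$ with $w=2\pi-s$ and using $dw/ds=-1$, the chain rule produces $\sqrt Q$, $Q'$, $\sqrt Q{}'$, $Q''$, $\sqrt Q{}''$ with the stated powers of $w$ and constants $-\frac{2\lambda}{r_0-1}$, $-\frac{\sqrt\lambda}{r_0-1}$, $\frac{2\lambda(3-r_0)}{(r_0-1)^2}$, $\frac{\sqrt\lambda(2-r_0)}{(r_0-1)^2}$. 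The only nontrivial point in this last step is justifying that the $O(w)$ error term, which is itself smooth on $(0,2\pi)$, differentiates to give $O(1)$ and hence keeps the $(1+O(w))$ form after multiplying through — this follows because the full $c$-expansion of $P$ can be carried to the next order (the next term is of order $a^{\min(2(p-1),\,q-1)}$, i.e. $O(w^{1+\epsilon})$ relative to the leading term), and the (local) real-analyticity of $P$ on $(0,\infty)$ lets us differentiate the asymptotic expansion legitimately via Borel–Ritt-type or direct Lagrange-remainder estimates.
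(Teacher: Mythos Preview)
Your core observation is correct and is in fact a genuine defect in the paper: since $1<p<q$, the term $A(c)^{p-1}$ dominates $A(c)^{q-1}$ as $c\to0$, so $2\pi-P(c)=C_p\,c^{(p-1)/2}(1+o(1))$ and hence $Q(s)\sim\lambda(2\pi-s)^{2/(p-1)}$, not $(2\pi-s)^{2/(q-1)}$. Every $q$ in the displayed exponents of the lemma should be $p$.

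The paper's route differs from yours and is worth knowing. Rather than expanding the integrand of \eqref{eq2.5} directly, it factors $P=W\circ\Psi^{-1}$ via the substitution $u=\tfrac{2}{q+1}A(c)^{q-1}$, so that $c=\Psi(u)$ is an explicit algebraic function and $Q=\Psi\circ W^{-1}$; the six expansions then drop out by Taylor-expanding $W^{-1}$ at $s=2\pi$ and applying the chain rule to $\Psi,\Psi',\Psi''$. This composition trick is cleaner than your final step of differentiating an asymptotic relation and arguing that the $O(w)$ remainder survives differentiation, which is the weakest part of your sketch. However, the paper asserts that $W\in C^\infty[0,\infty)$ with $W'(0)$ finite and nonzero, whereas its own formula for $W$ contains a term $u^{(p-1)/(q-1)}$ with exponent strictly less than $1$, and its computed $W'(u)$ contains $u^{(p-q)/(q-1)}$ with \emph{negative} exponent, so in fact $W'(0^+)=-\infty$ and $\tilde\lambda=-1/W'(0)$ is not a positive number. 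The paper thus commits exactly the error you flagged, only hidden inside the substitution: choosing $u\propto A^{q-1}$ rather than $u\propto A^{p-1}$ pushes the dominant nonlinearity into a fractional power of $u$, which is then silently treated as higher order. Had the paper taken $u=\tfrac{2}{p+1}A(c)^{p-1}$ instead, its composition argument would go through verbatim with $p$ replacing $q$, matching your expansion; you might adopt that factorisation in place of your Borel--Ritt handwave.
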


\begin{proof}
In view of \eqref{eq2.5}, we define
\begin{equation*}
P(c)=W\left(\frac{2}{q+1}A(c)^{q-1}\right),
\end{equation*}
where
\begin{eqnarray*}
W(u)&=&4\int_{0}^{1}\frac{1}{\sqrt{1-z^2+\frac{2}{p+1}(1-z^{p+1})(\frac{q+1}{2})^{\frac{p-1}{q-1}}u^{\frac{p-1}{q-1}}+u(1-z^{q+1})}}dz
\\&=&4\int_{0}^{1}\frac{1}{\sqrt{1-z^2}\sqrt{1+m(z)\left(u+\frac{2}{p+1}(\frac{q+1}{2})^{\frac{p-1}{q-1}}u^{\frac{p-1}{q-1}}n(z)\right)}}dz
\end{eqnarray*}
with $u=\frac{2}{q+1}A(c)^{q-1}\geq0$, and $m(z)=\frac{1-z^{q+1}}{1-z^2}$ and $n(z)=\frac{1-z^{p+1}}{1-z^{q+1}}$ are continuous and positive functions on $[0,1]$ and $[1,\infty)$ respectively. Thus we have $W(0)=2\pi$, $W(\infty)=0$ and $W\in C^{\infty}[0,\infty)$. Moreover, $W$ is strictly decreasing and convex since
\begin{eqnarray}
\nonumber W'(u)&=\ &-2\int_{0}^{1}\frac{m(z)\left ( 1+\frac{2(p-1)}{(p+1)(q-1)}(\frac{q+1}{2})^{\frac{p-1}{q-1}}u^{\frac{p-q}{q-1}}n(z)\right )}{\sqrt{1-z^2}\left(1+m(z)\left(u+\frac{2}{p+1}(\frac{q+1}{2})^{\frac{p-1}{q-1}}u^{\frac{p-1}{q-1}}n(z)\right)\right) ^{\frac{3}{2}}}dz
\\&=&-2\int_{0}^{1}\frac{h(u,z)}{\sqrt{1-z^2}g(u,z)^{\frac{3}{2}}}dz<0 \label{eq2.6}
\end{eqnarray}
for $1<p<q$, where $h(u,z)=m(z)\left (1+\frac{2(p-1)} {(p+1)(q-1)}(\frac{q+1}{2})^{\frac{p-1}{q-1}}u^{\frac{p-q}{q-1}}n(z)\right )$ and $g(u,z)=1+m(z)\left(u+\frac{2}{p+1}(\frac{q+1}{2})^{\frac{p-1}{q-1}}u^{\frac{p-1}{q-1}}n(z)\right)$, and
\begin{eqnarray}
W''(u)&=&-2\int_{0}^{1}\frac{h'(u,z)g(u,z)^{\frac{3}{2}}-h(u,z)\frac{3}{2}g(u,z)^{\frac{1}{2}}h(u,z)}{\sqrt{1-z^2}g(u,z)^{3}}dz
\\\nonumber &=&-2\int_{0}^{1}\frac{m(z)\frac{2(p-1)(p-q)}{(p+1)(q-1)^2}(\frac{q+1}{2})^{\frac{p-1}{q-1}}u^{\frac{p-2q+1}{q-1}}g(u,z)- \frac{3}{2}h(u,z)^2}{\sqrt{1-z^2}g(u,z)^{\frac{5}{2 }}}dz>0.
\end{eqnarray}
Hence we obtain that $W^{-1}\in C^{\infty}((0,2\pi])$. Furthermore, in view of \eqref{eq2.6} together with the fact that $A'(c)>0$ on $(0,\infty)$, it is easy to see that $P'(c)<0$ on $(0,\infty)$ which implies the invertibility of $P$.

We then do some preparations for studying $P^{-1}$. From equation (\ref{dA}) and $u=\frac{2}{q+1}A(c)^{q-1}$, we can easily verify
\begin{eqnarray*}
c&=\ &A(c)^2+\frac{2}{p+1}A(c)^{p+1}+\frac{2}{q+1}A(c)^{q+1}
\\&=\ &(\frac{q+1}{2})^{\frac{2}{q-1}}(u^{\frac{2}{q-1}}+u^{\frac{q+1}{q-1}})+ \frac{2}{p+1}(\frac{q+1}{2})^{\frac{p+1}{q-1}}u^{\frac{p+1}{q-1}}
\\&=:&\Psi(u).
\end{eqnarray*}
Then we denote
\begin{equation*}
P(c)=W(u)=W(\Psi^{-1}(c))\quad\mbox{and}\quad Q=P^{-1}=\Psi\circ W^{-1}.
\end{equation*}
From the Taylor approximation for $W^{-1}$ at $2\pi$ with $\tilde{\lambda}=-\frac{1}{W'(0)}>0$ and $\tilde{\theta}=W''(0)>0$, we find that as $s\rightarrow2\pi^{-}$,
\begin{eqnarray*}
W^{-1}(s)&=&W^{-1}(2\pi)+(W^{-1})'(2\pi)(s-2\pi)+O((2\pi-s)^2)
\\&=&\tilde{\lambda }(2\pi-s)(1+O(2\pi-s)),
\\(W^{-1})'(s)&=&\frac{1}{W'(W^{-1}(s))}=-\tilde{\lambda}(1+O(2\pi-s)),
\\(W^{-1})''(s)&=&-\frac{W''(W^{-1}(s))}{\left(W'(W^{-1}(s))\right)^3}=\tilde{\theta}\tilde{\lambda}^3(1+O(2\pi-s)).
\end{eqnarray*}
Thus, as $s\rightarrow2\pi^{-}$ we have
\begin{eqnarray*}
Q(s)&=&\Psi(W^{-1}(s))
\\&=&(\frac{q+1}{2})^{\frac{2}{q-1}}\left((W^{-1}(s))^{\frac{2}{q-1}}+(W^{-1}(s))^{\frac{q+1}{q-1}}\right)\\
&&+\frac{2}{p+1}(\frac{q+1}{2})^{\frac{p+1}{q-1}}(W^{-1}(s))^{\frac{p+1}{q-1}}
\\&=&\lambda (2\pi-s)^{\frac{2}{q-1}}(1+O(2\pi-s))
\end{eqnarray*}
with $\lambda=(\frac{q+1}{2})^{\frac{2}{q-1}}\tilde{\lambda}^{\frac{2}{q-1}}>0$. Note that we also have
\begin{eqnarray*}
Q'(s)&=&\Psi'(W^{-1}(s))(W^{-1})'(s))
\\&=&(\frac{q+1}{2})^{\frac{2}{q-1}}\left(\frac{2}{q-1}(W^{-1}(s))^{\frac{3-q}{q-1}}+\frac{q+1}{q-1}(W^{-1}(s))^{\frac{2}{q-1}}\right)\\
&&\times(-\tilde{\lambda})(1+O(2\pi-s))
\\&&+\frac{2}{q-1}(\frac{q+1}{2})^{\frac{p+1}{q-1}}(W^{-1}(s))^{\frac{p-q+2}{q-1}}(-\tilde{\lambda})(1+O(2\pi-s))
\\&=&-\frac{2\lambda}{q-1}(2\pi-s)^{\frac{3-q}{q-1}}(1+O(2\pi-s))
\end{eqnarray*}
and
\begin{eqnarray*}
Q''(s)&=&\Psi''(W^{-1}(s))\left((W^{-1})'(s)\right)^2+\Psi'(W^{-1}(s))(W^{-1})''(s)
\\&=&(\frac{q+1}{2})^{\frac{2}{q-1}}\left(\frac{2(3-q)}{(q-1)^2}(W^{-1}(s))^{\frac{4-2q}{q-1}}+\frac{2(q+1)}{(q-1)^2}(W^{-1}(s))^ {\frac{3-q}{q-1}}\right)\\
&&\times\tilde{\lambda}^2(1 +O(2\pi-s))+O((2\pi-s)^{\frac{3-q}{q-1}})
\\&&+\frac{2(p-q+2)}{(q-1)^2}(\frac{q+1}{2})^{\frac{p+1}{q-1}}(W^{-1}(s))^{\frac{p-2q+3}{q-1}}\tilde{\lambda}^2(1+O(2\pi-s))\\
\\&=&\frac{2\lambda(3-q)}{(q-1)^2}(2\pi-s)^{\frac{4-2q}{q-1}}(1+O(2\pi-s))
\end{eqnarray*}
as $s\rightarrow2\pi^{-}$. Thus we can easily obtain the representations for $\sqrt{Q}$, $\sqrt{Q}'=\frac{Q'}{2\sqrt{Q}}$ and $\sqrt{Q} ''=\frac{1}{2Q^{3/2}}(Q''Q-\frac{1}{2}(Q')^2)$ by substituting the above representations of $Q$, $Q'$ and $Q''$. The proof is completed.
\end{proof}

\section{Main results}

In this section, we shall present our main results for the existence of breathers of \eqref{eq1.1} and the corresponding proofs. Recall that, for a $C^2$-function $U:\R^3\rightarrow\R$, $U(x)\rightarrow0$ in the $C^2$-sense as $x\rightarrow0$ means $U(x)\rightarrow0,\ \nabla U(x)\rightarrow0$ and $\ D^2U(x)\rightarrow0$ as $x\rightarrow0.$

\begin{theorem}\label{th4.1}
Assume $\rho(x), \mu(x), v_p(x), v_q(x):\R^3\rightarrow(0,\infty)$ are radially symmetric $C^2$-functions satisfying $(V)$ and let $T=2\pi\sigma(0)$. If the following four conditions hold:

$(H1)$ $\sigma(0)\sigma(x)<1$ for all $x\in\R^3\setminus \{0\}$,

$(H2)$ $|1-\sigma(0)\sigma(x)|^{\frac{1}{q-1}}\rightarrow0$ in the $C^2$-sense as $x\rightarrow0$,

$(H3)$ $\sup_{x\in\R^3}|1-\sigma(0)\sigma(x)|e^{\gamma(q-1)|x|}<\infty$ for some $\gamma>0$, and

$(H4)$ $\sup_{x\in\R^3}\tau(x)<\infty$,
\\then there exists a $T$-periodic $\R^3$-valued breather solution $\mathbf{u}$ of \eqref{eq1.1} with the extra property that $\sup_{\R^3\times\R}|\mathbf{u}(x,t)|e^{\gamma|x|}<\infty$. The breather solution $\mathbf{u}$ can generate a continuum of phase-shifted breathers $\mathbf{u}_b(x,t)=\mathbf{u}(x,t+b(x))$ where $b:\R^3\rightarrow\R$ is an arbitrary radially symmetric $C^2$-function.

\end{theorem}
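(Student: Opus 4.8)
The plan is to build the breather explicitly from the normalized profile $\phi$ and the period function machinery of Section~3, exploiting the rescaling $y(r,t)=\tilde\tau(r)\phi(\tilde\sigma(r)t)$ established in Section~2. First I would fix the parameter $c$: since we want the assembled solution $\mathbf u$ to be $T$-periodic in $t$ with $T=2\pi\sigma(0)$, and since $y(r,t)=\tilde\tau(r)\phi(\tilde\sigma(r)t)$ has minimal period $P(c)/\tilde\sigma(r)$ in $t$ for a $\phi$ with minimal period $P(c)$, I need $P(c)=\tilde\sigma(r)\,T$ for all $r$ — which cannot hold with a single $c$ unless $\tilde\sigma$ is constant. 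So instead, for each $r$ I select $c=c(r)$ by demanding that the minimal period of the $r$-slice equals $T$; concretely $P(c(r))=\tilde\sigma(r)T=2\pi\,\sigma(0)\tilde\sigma(r)$. By Lemma~\ref{lemma3.2}, $P$ maps onto $(0,2\pi]$, and by $(H1)$ we have $\sigma(0)\tilde\sigma(r)\in(0,1)$ for $r>0$ with value $1$ at $r=0$, so $2\pi\sigma(0)\tilde\sigma(r)\in(0,2\pi]$ lies exactly in the range of $P$; hence $c(r)=Q(2\pi\sigma(0)\tilde\sigma(r))$ is well-defined, with $c(0)=Q(2\pi)=0$. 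Then set $\phi_{c(r)}$ to be the solution of \eqref{eq1.4} with $B(\phi,\dot\phi)=c(r)$ and (say) $\phi_{c(r)}(0)=A(c(r))$, $\dot\phi_{c(r)}(0)=0$, and define
\begin{equation*}
y(r,t):=\tilde\tau(r)\,\phi_{c(r)}\!\left(\tilde\sigma(r)\,t\right),\qquad \mathbf u(x,t):=y(|x|,t)\frac{x}{|x|}.
\end{equation*}
Each $r$-slice solves \eqref{eq1.2} because $\phi_{c(r)}$ solves \eqref{eq1.4} and the coefficients were matched; this is the content of the computation leading to \eqref{eq1.4}. So $\mathbf u$ is a formal solution of \eqref{eq1.1}, $T$-periodic in $t$ (note all slices share the common period $T$, since each has minimal period exactly $T$, possibly $T$ itself at $r=0$ where $c=0$ gives $\phi\equiv0$).

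Next I would verify the regularity. By Lemma~\ref{lemma2.1}(ii), $\mathbf u\in C^2(\R^3\times\R)$ provided $y(\cdot,t)\in C^2([0,\infty))$ with $y(0,t)=y''(0,t)=0$ for each $t$. The vanishing $y(0,t)=0$ is immediate from $\tilde\tau(0)\phi_{c(0)}=\tilde\tau(0)\cdot0=0$. For the $C^2$ regularity in $r$ and the condition $y''(0,t)=0$, I write $y(r,t)=\tilde\tau(r)\phi_{c(r)}(\tilde\sigma(r)t)$ and use the representation $\phi_{c} = \sqrt{c}\,\Phi(\cdot;c)$ where, more usefully, I track powers of $\sqrt{c(r)}$: since $|\phi_{c(r)}|\le A(c(r))\le\sqrt{c(r)}$ (Lemma~\ref{lemma3.1}(2)), the amplitude of $y$ near $r=0$ is controlled by $\tilde\tau(r)\sqrt{c(r)}=\tilde\tau(r)\sqrt{Q(2\pi\sigma(0)\tilde\sigma(r))}$, and by Lemma~\ref{lemma2.5} the factor $\sqrt{Q}$ and its first two $s$-derivatives behave like powers $(2\pi-s)^{1/(q-1)},(2\pi-s)^{(2-q)/(q-1)},(2\pi-s)^{(3-2q)/(q-1)}$, composed with $s=2\pi\sigma(0)\tilde\sigma(\cdot)$, so $2\pi-s = 2\pi(1-\sigma(0)\tilde\sigma(r))$. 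Hypothesis $(H2)$ says exactly that $(1-\sigma(0)\tilde\sigma(r))^{1/(q-1)}\to0$ in the $C^2$-sense as $r\to0$; combined with the chain-rule expansions from Lemma~\ref{lemma2.5} (which convert the higher $s$-derivatives into products of lower powers and the $C^2$ data of $\tilde\sigma$ via $(H2)$), this forces $y(r,t),y'(r,t),y''(r,t)\to0$ as $r\to0$, uniformly for $t$ in a period. In particular $y''(0,t)=0$ and $y\in C^2$; away from $r=0$ everything is smooth since $\tilde\rho,\tilde\mu,\tilde v_p,\tilde v_q\in C^2$ are positive so $\tilde\tau,\tilde\sigma,c(\cdot)$ are $C^2$ and $\phi_c$ depends smoothly on $(t,c)$ by ODE theory (the level $c>0$ is a regular periodic orbit). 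This step — translating $(H2)$ through the composition with the Lemma~\ref{lemma2.5} asymptotics to get the three derivative limits — is the technical heart of the argument and where I expect the bookkeeping to be heaviest.

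Then I would establish the exponential localization. From $|y(r,t)|\le\tilde\tau(r)\sqrt{c(r)}=\tilde\tau(r)\sqrt{Q(2\pi\sigma(0)\tilde\sigma(r))}$ and Lemma~\ref{lemma2.5}'s leading term $\sqrt{Q(s)}\sim\sqrt\lambda\,(2\pi-s)^{1/(q-1)}$, I get $|y(r,t)|\lesssim \tilde\tau(r)\,\bigl(1-\sigma(0)\tilde\sigma(r)\bigr)^{1/(q-1)}$ for large $r$ (where $1-\sigma(0)\tilde\sigma$ need not be small, but the bound $\sqrt Q\le\sqrt c$ and the explicit form of $Q$ still give at worst a power of $1-\sigma(0)\tilde\sigma(r)$, bounded because $\tilde\sigma$ is bounded — or one simply uses $A(c)\le\sqrt c$ and a crude global estimate). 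Hypothesis $(H4)$ bounds $\tilde\tau$, and $(H3)$ says $(1-\sigma(0)\tilde\sigma(r))^{1/(q-1)}\le C e^{-\gamma|x|}$; multiplying, $\sup_{\R^3\times\R}|\mathbf u(x,t)|e^{\gamma|x|}<\infty$. Hence $\mathbf u$ is a genuine breather. Finally, for the phase-shift family: fix any radially symmetric $C^2$ function $b:\R^3\to\R$, written $b(x)=\tilde b(|x|)$ with $\tilde b\in C^2([0,\infty))$, $\tilde b'(0)=0$. Because \eqref{eq1.2} is autonomous in $t$ for each fixed $r$, the time-translate $\tilde\tau(r)\phi_{c(r)}(\tilde\sigma(r)(t+\tilde b(r)))$ again solves \eqref{eq1.2}; so $\mathbf u_b(x,t)=\mathbf u(x,t+b(x))$ solves \eqref{eq1.1}, is still $T$-periodic in $t$, and still satisfies the same exponential bound. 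Its $C^2$ regularity follows from the $C^2$ regularity of $\mathbf u$ and of $b$ (and $\tilde b'(0)=0$, with $y(0,t)\equiv0$ making the composed slice vanish at $r=0$ together with its second $r$-derivative by the same argument as before, now also using boundedness of $\tilde b,\tilde b',\tilde b''$ near $0$). This yields the claimed continuum of distinct breathers and completes the proof.
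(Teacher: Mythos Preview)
Your proposal is correct and follows essentially the same route as the paper: select $c(r)$ via the inverse period function $Q=P^{-1}$ so that every radial slice has common period $T$, use the Lemma~\ref{lemma2.5} asymptotics together with $(H2)$ to force the vanishing of $y,y',y''$ at $r=0$, invoke $(H3)$--$(H4)$ for the exponential bound, and exploit the autonomy in $t$ for the phase-shifted continuum.

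One technical point worth noting: the paper parametrizes orbits so that the first integral equals $c^2$ and takes the initial curve $\xi(c)=(0,c)$, which is \emph{smooth} in $c$; by ODE theory $\phi(t;c)$ is then $C^2$ jointly, and the computation of $y''(0,t)=0$ reduces to checking that $c(0)=c'(0)=c''(0)=0$ (which follows from Lemma~\ref{lemma2.5} and $(H2)$) and that $\phi(\cdot;0)\equiv0$. Your choice $\phi_{c}(0)=A(c)$, $\dot\phi_{c}(0)=0$ is less convenient because $A(c)\sim\sqrt c$ is not $C^1$ in $c$ at $0$, so smooth dependence of $\phi_c$ on $c$ fails there and the chain-rule bookkeeping for $y''(0,t)$ requires extra care (one must track $A(c(r))$ directly and use that it inherits $C^2$-vanishing from $(2\pi-d(r))^{1/(q-1)}$). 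This is fixable, but the paper's parametrization sidesteps the issue entirely.
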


\begin{proof} \textbf{Step 1}: Choose $c$ as a function of the radial variable $r\in[0,\infty)$.
We first select a $C^2$-curve $\xi:[0,\infty)\rightarrow\R^2$ in the phase space so that $B(\xi(c))=c^2$, where $B$ is the first integral as defined in Definition \ref{de2.3}. Then let us give an example to make it easy be understood such a curve, e.g., $\xi(c)=(0,c)$, which only selects a special case of the continuum of phase-shifted breathers as described. However there exist some other choices of $\xi$ and will be given later.

We denote the solution of \eqref{eq1.4} by $\phi(t;c)$ with $(\phi(0;c),\dot{\phi}(0;c))=\xi(c)$. Then $\phi:\R\times[0,\infty)\rightarrow\R$ is a $C^2$-function and $\phi(t;c)$ is $P(c^2)$-periodic in the $t$-variable.
Now let us define the solution $y$ of \eqref{eq1.2} by
\begin{equation}\label{eq3.1}
y(r,t)=\tilde{\tau}(r)\phi(\tilde{\sigma}(r)t;c)
\end{equation}
with $\tilde{\tau}(r)$ and $\tilde{\sigma}(r)$ as described in \eqref{eq1.5}. The necessary condition of $T$-periodicity of $y$ in the $t$-variable tells us how to choose $c$ as a function of the radial variable $r\in[0,\infty)$, i.e.,
\begin{equation}
d(r):=\tilde{\sigma}(r)T=P(c^2).
\end{equation}
By the properties of $Q=P^{-1}$ that, $Q: (0,2\pi]\rightarrow[0,\infty)$ is strictly decreasing and $C^{\infty}$ on $(0,2\pi)$ (see Lemmas \ref{lemma3.2} and \ref{lemma2.5}), we get
\begin{equation}\label{eq3.4}
c(r)=\left(Q(d(r))\right)^{\frac{1}{2}},
\end{equation}
which has to be inserted into \eqref{eq3.1}. We can easily verify that $c(r)$ is well-defined and $C^2$ on $(0,\infty)$ by the assumption $(H1)$.

\textbf{Step 2}: Existence of the breather solution with the assumed form $\mathbf{u}(x,t):=y(|x|,t)\frac{x}{|x|}$. Here we need to prove that $y:[0,\infty)\times\R\rightarrow\R$ is a $C^2$-function with $y(0,t)=y''(0,t)=0$ and even exponentially decays to 0 as $r\rightarrow0$. In view of Lemma \ref{lemma3.1} and the assumption $(H4)$, we have
\begin{eqnarray*}
|y(r,t)|\leq\tilde{\tau}(r)A(c(r)^2)\leq\tilde{\tau}(r)c(r)
\leq G\left(Q(d(r))\right)^{\frac{1}{2}},
\end{eqnarray*}
where $G$ is a constant. The assumptions $(H2)$ and $(H3)$ yield that $d(r)$ tends to $2\pi$ as $r\rightarrow\infty$ and as $r\rightarrow0$, then we can easily verify the estimate
\begin{equation}\label{eq3.5}
|y(r,t)|\leq G\sqrt{\lambda}(2\pi-d(r))^{\frac{1}{q-1}}O(1)\quad\mbox{as}\ r\rightarrow\infty\ \mbox{and}\ \mbox{as}\ r\rightarrow0.
\end{equation}
By the assumption $(H3)$, we obtain that $|y(r,t)|\leq Ce^{-\gamma r}$ for $r\geq0$, with some positive constant $C$. This implies the exponential decay of $\mathbf{u}(x,t)=y(|x|,t)\frac{x}{|x|}$ as $|x|\rightarrow\infty$.

In view of \eqref{eq3.5} and the assumption $(H2)$, we can easily see that $y(0,t)=0$, then we only need to prove $y:[0,\infty)\times\R\rightarrow\R$ is a $C^2$-function and $y''(0,t)=0$. For this, by \eqref{eq3.4} we deduce that
\begin{equation*}
c(r)=\sqrt{\lambda}(2\pi-d(r))^{\frac{1}{q-1}}O(1)\rightarrow0\quad\mbox{as}\ r\rightarrow0.
\end{equation*}
Furthermore, by Lemma \ref{lemma2.5} and the assumption $(H2)$, we have
\begin{eqnarray*}
c'(r)&=&\sqrt{Q}'(d(r))d'(r)
\\&=&-\frac{\sqrt{\lambda}}{q-1}(2\pi-d(r))^{\frac{2-q}{q-1}}O(1)d'(r)
\\&=&\sqrt{\lambda}\left((2\pi-d(r))^{\frac{1}{q-1}}\right)'O(1)
\\&=&o(1)\ \ as\ r\rightarrow0.
\end{eqnarray*}
 Moreover, we obtain
\begin{eqnarray*}
c''(r)&=&\sqrt{Q}''(d(r))d'(r)^2+\sqrt{Q}'(d(r))d''(r)
\\&=&\frac{\sqrt{\lambda}(2-q)}{(q-1)^2}(2\pi-d(r))^{\frac{3-2q}{q-1}}(1+O(2\pi-d(r)))d'(r)^2
\\&&-\frac{\sqrt{\lambda}}{q-1}(2\pi-d(r))^{\frac{2-q}{q-1}}(1+O(2\pi-d(r)))d''(r)
\\&=&\sqrt{\lambda}\left((2\pi-d(r))^{\frac{1}{q-1}}\right)''+O(1)\frac{\sqrt{\lambda}(2-q)}{(q-1)^2}(2\pi-d(r))^{\frac{2-q}{q-1}}d'(r)^2
\\&&-O(1)\frac{\sqrt{\lambda}}{q-1}(2\pi-d(r))^{\frac{1}{q-1}}d''(r).
\end{eqnarray*}

In view of the assumption $(H2)$, the term $L_1(r)=\left((2\pi-d(r))^{\frac{1}{q-1}}\right)''\rightarrow0$ as $r\rightarrow0$. By the definition of $d(r)$, we see that $d$ is a $C^2$-function on $[0,\infty)$. Thus the term $L_2(r)=(2\pi-d(r))^{\frac{2-q}{q-1}}d'(r)^2=(1-q)\left((2\pi-d(r))^{\frac{1}{q-1}}\right)'d'(r)\rightarrow0$ as $r\rightarrow0$, due to the fact that $d'$ is bounded near 0 together with the assumption $(H2)$. As for the term $L_3=(2\pi-d(r))^{\frac{1}{q-1}}d''(r)$, it converges to 0 since $d(r)\rightarrow2\pi$ as $r\rightarrow0$ and $d''$ is bounded near 0. Then the above equation implies $c''(r)\rightarrow0$ as $r\rightarrow0$. So that $c$ can be extended to a function $c\in C^2([0,\infty))$ with $c(0)=c'(0)=c''(0)=0$. From this and \eqref{eq3.1}, we derive $y(r,t)\in C^2([0,\infty)\times\R)$ with the presentation $y(r,t)=\tilde{\tau}(r)\phi(\tilde{\sigma}(r)t,c(r))$. Then we can deduce that
\begin{equation*}
y'(r,t)=\tilde{\tau}'(r)\phi(\tilde{\sigma}(r)t,c(r))+\tilde{\tau}(r)\dot{\phi}(\tilde{\sigma}(r)t,c(r)) \tilde{\sigma}'(r)t+\tilde{\tau}(r)\frac{\partial \phi}{\partial c}(\tilde{\sigma}(r)t,c(r))c'(r)
\end{equation*}
and
\begin{eqnarray*}
y''(0,t)&=&\tilde{\tau}''(0)\phi(\tilde{\sigma}(0)t,c(0))+2\tilde{\tau}'(0)\dot{\phi}(\tilde{\sigma}(0)t,c(0)) \tilde{\sigma}'(0)t
\\&&+2\tilde{\tau}'(0)\frac{\partial \phi}{\partial c}(\tilde{\sigma}(0)t,c(0))c'(0)+\tilde{\tau}(0)\ddot{\phi}(\tilde{\sigma}(0)t,c(0))\tilde{\sigma}'(0)^2t^2
\\&&+\tilde{\tau}(0)\dot{\phi}(\tilde{\sigma}(0)t,c(0))\tilde{\sigma}''(0)t+2\tilde{\tau}(0)\frac{\partial \dot{\phi}}{\partial c}(\tilde{\sigma}(0)t,c(0))\tilde{\sigma}'(0)tc'(0)
\\&&+\tilde{\tau}(0)\frac{\partial^2\phi}{\partial c^2}(\tilde{\sigma}(0)t,c(0))c'(0)^2+\tilde{\tau}(0)\frac{\partial \phi}{\partial c}(\tilde{\sigma}(0)t,c(0))c''(0)
\\&=&0,
\end{eqnarray*}
where we use $\phi(\cdot,0)=0$, $\dot{\phi}(\cdot,0)=0$, $\ddot{\phi}(\cdot,0)=0$ and $c(0)=c'(0)=c''(0)=0$. In view of Lemma \ref{lemma2.1}, we obtain that $\mathbf{u}(x,t)\in C^2(\R^3\times\R)$.

\textbf{Step 3}: We now show how to generate a continuum of different phase-shifted breathers from a radially symmetric breather solution.

Firstly, we discuss the choice of the initial curve $\xi(c)=(0,c)$ which gives rise to the solution family $\phi(t;c)$ such that $(\phi(0;c),\dot{\phi}(0;c))=\xi(c)$. This special selection is convenient but arbitrary. Then other possible choices of $\xi$ will be given, for example, by
\begin{equation*}
\tilde{\xi}(c):=(\phi(a(c);c),\dot{\phi}(a(c);c)),
\end{equation*}
where $a(c)$ is an arbitrary real $C^2$-function on $[0,\infty)$. We need to find some $C^2$-curve such that $B(\xi(c))=c^2$, where $B$ is the first integral of \eqref{eq1.4}. It is easy to see that $B(\tilde{\xi}(c))=B(\phi(a(c);c),\dot{\phi}(a(c);c))=c^2$. As for the new curve $\tilde{\xi}(c)$, we can give a new solution family $\tilde{\phi}(t;c)$ determined by the initial condition
\begin{equation*}
(\tilde{\phi}(0;c),\dot{\tilde{\phi}}(0;c))=\tilde{\xi}(c).
\end{equation*}
For the uniqueness of the initial value problem, the above two solution families have a simple relationship as
\begin{equation*}
\tilde{\phi}(t;c)=\phi(t+a(c);c).
\end{equation*}
By the choice of the new curve, we then compare the solutions $\mathbf{u}$ and $\tilde{\mathbf{u}}$ generated by $\xi$ and $\tilde{\xi}$, i.e.,
\begin{equation*}
\mathbf{u}(x,t)=\tilde{\tau}(r)\phi(\tilde{\sigma}(r)t;c(r))\frac{x}{r}
\end{equation*}
with $c(r)=\left(Q(d(r))\right)^{\frac{1}{2}}=\left(P^{-1}(\tilde{\sigma}(r)T)\right)^{\frac{1}{2}}$ and
\begin{eqnarray*}
\tilde{\mathbf{u}}(x,t)&=&\tilde{\tau}(r)\tilde{\phi}(\tilde{\sigma}(r)t;c(r))\frac{x}{r}
\\&=&\tilde{\tau}(r)\phi(\tilde{\sigma}(r)t+a(c(r));c(r))\frac{x}{r}
\\&=&y(|x|,t+\tilde{b}(r))\frac{x}{r}
\\&=&\mathbf{u}(x,t+\tilde{b}(r)),
\end{eqnarray*}
where $\tilde{b}(r)=a(c(r))/\tilde{\sigma}(r)$ is a $C^2$-function on $[0,\infty)$. Thus $b:\R^3\rightarrow\R$ is an arbitrary radially symmetric $C^2$-function generated by $b(x)=\tilde{b}(|x|)$ as described in Theorem \ref{th4.1}. Then the different choice of the initial curve can generate a continuum of phase-shifted breathers from one radially symmetric breather solution. This completes the proof of this theorem.
\end{proof}

\begin{remark}\label{re4.1}
\rm{Let $\mu(x)=2|x|^2+1, \rho(x)=\frac{2|x|^2+1}{(1-|x|^4e^{-|x|^4})^2}, v_q(x)=|x|^2e^{|x|^2}+1$. By a simple computation, it is easy to check that these functions satisfy the assumptions $(H1)-(H4)$. In this case, we can prove that the $2\pi$-periodic $\R^3$-valued breather solution exists in Theorem \ref{th4.1} which are exhibited in Figure \ref{f2}.}
\begin{figure}[h]
  \centering
  \includegraphics[width=6cm]{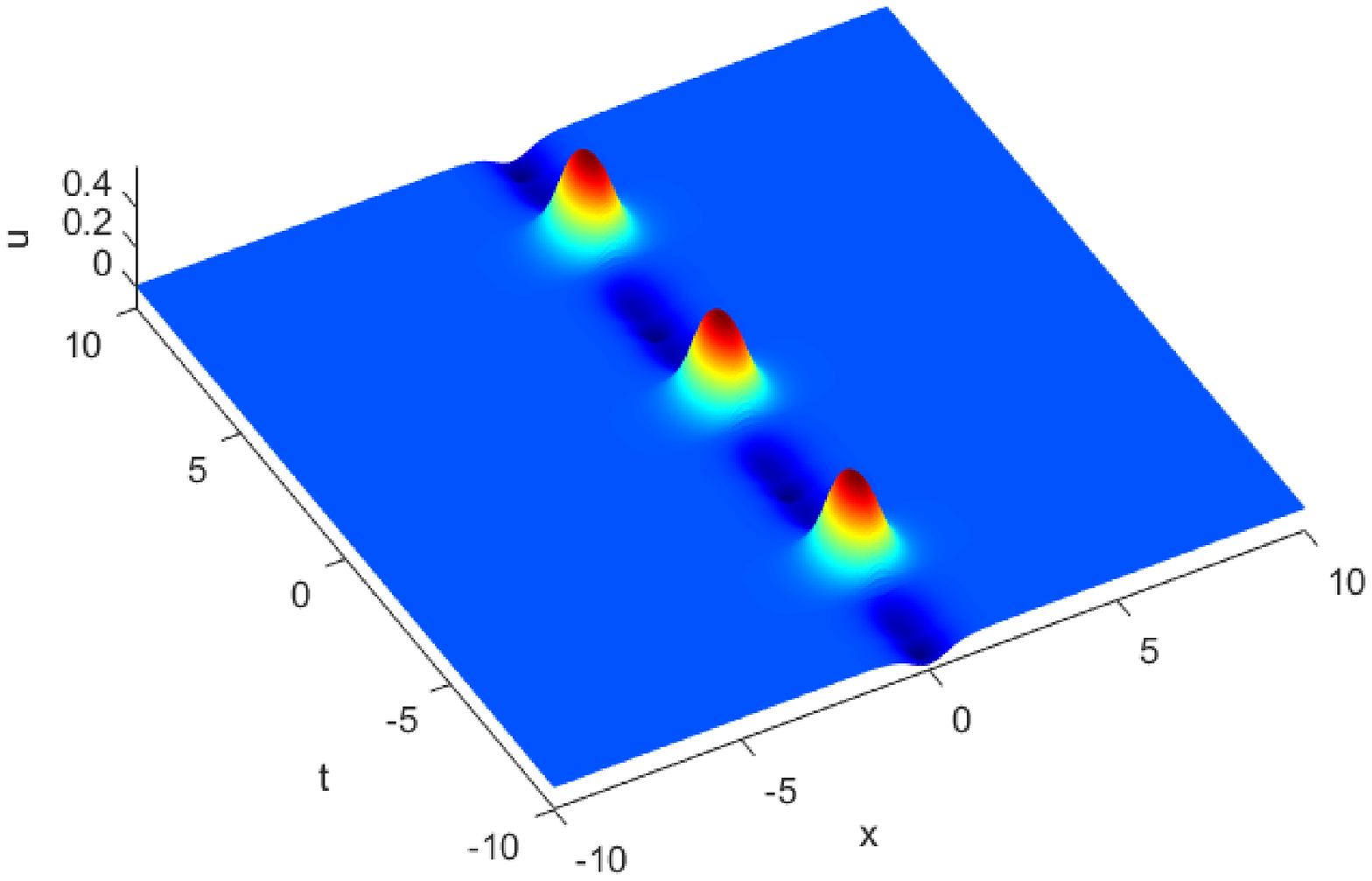}(a)\includegraphics[width=4.5cm]{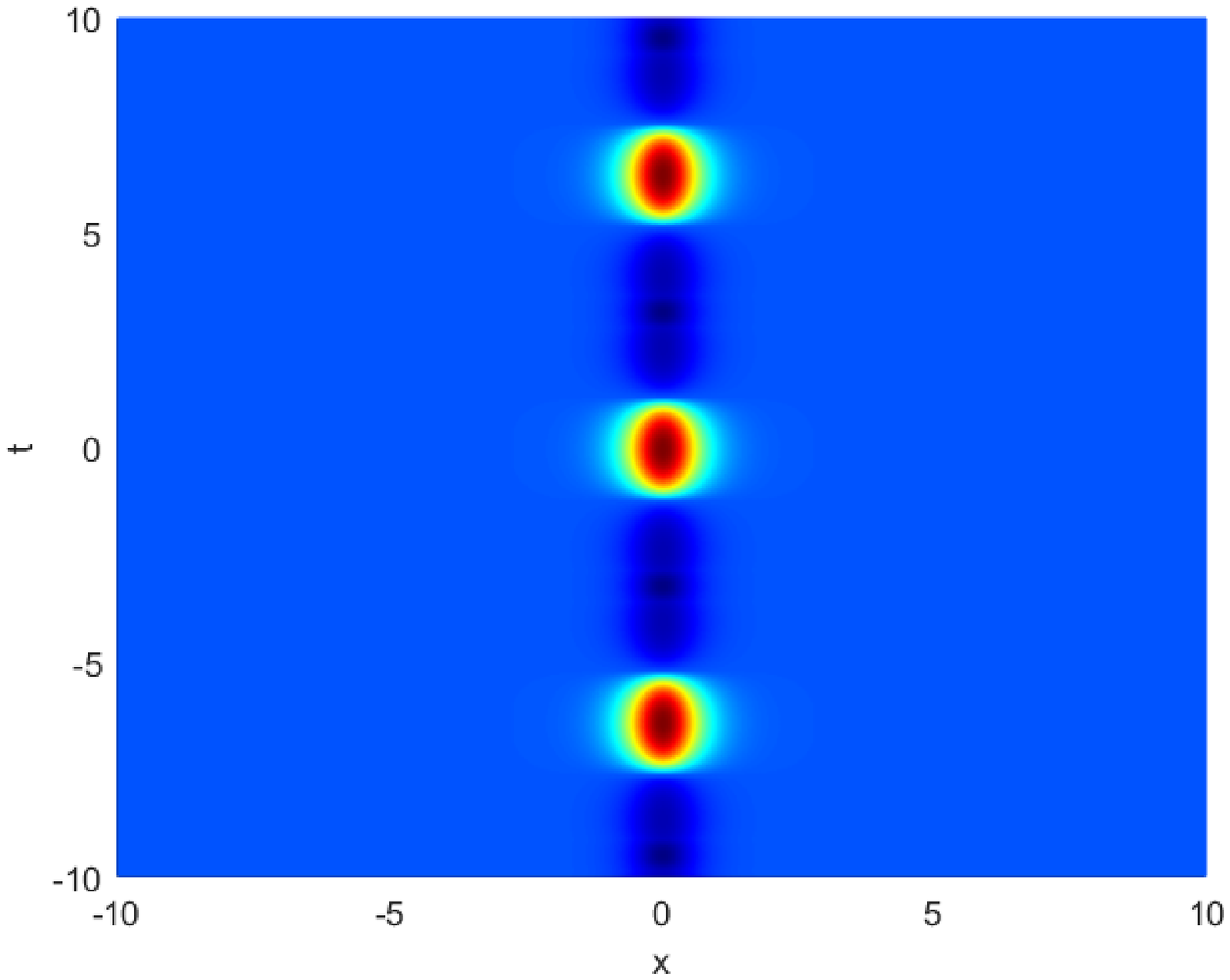}(b)\\
  \caption{(a) and (b) show part of the breather solution for p=3 and q=4.}\label{f2}
\end{figure}
\end{remark}

\begin{theorem}
Assume $\rho(x), \mu(x), v_p(x), v_q(x):\R^3\rightarrow(0,\infty)$ are radially symmetric $C^2$-functions such that conditions $(V)$ and $(H1)-(H4)$ hold and $T=2\pi\sigma(0)$. Then there exists a continuum of $T$-periodic $\mathbb{C}^3$-valued monochromatic breather solutions $\mathbf{u}(x,t)=e^{i\frac{2\pi}{T}t}\mathbf{u}(x)$ of equation \eqref{eq1.1} with $\sup_{\R^3}|\mathbf{u}(x)|e^{\gamma|x|}<\infty$.
\end{theorem}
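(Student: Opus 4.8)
The plan is to seek solutions of \eqref{eq1.1} (read for $\mathbb{C}^3$-valued fields, with $|\cdot|$ the Hermitian norm) of the form $\mathbf{u}(x,t)=e^{i\frac{2\pi}{T}t}\mathbf{u}(x)$ with $\mathbf{u}(x)=W(|x|)\frac{x}{|x|}$ for a complex-valued $W:[0,\infty)\to\mathbb{C}$, in the spirit of the construction of Section 2. First note that the $C^0$ content of $(H2)$ forces $\sigma(0)\sigma(x)\to1$ as $x\to0$, hence $\sigma(0)^2=1$ and $\sigma(0)=1$, so that $T=2\pi$ and $\frac{2\pi}{T}=1$. Since $\mathbf{u}(x)$ is a complex radial gradient field one has $\mathbf{M}(x)\nabla\times\mathbf{u}=0$, and since $|\mathbf{u}(x,t)|=|W(|x|)|$ is independent of $t$, substituting the ansatz into \eqref{eq1.1} and cancelling the common factor $e^{it}\frac{x}{|x|}$ (nonzero for $x\neq0$) reduces \eqref{eq1.1} to the pointwise relation
\[
\big(\tilde\mu(r)-\tilde\rho(r)\big)\,W(r)+\tilde v_p(r)\,|W(r)|^{p-1}W(r)+\tilde v_q(r)\,|W(r)|^{q-1}W(r)=0,\qquad r\ge0.
\]

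Write $W(r)=R(r)e^{i\theta(r)}$ with $R\ge0$ and $\theta$ an \emph{arbitrary} radially symmetric $C^2$ function — this will be the free parameter generating the continuum. The phase $e^{i\theta}$ factors out, leaving $(\tilde\mu-\tilde\rho)R+\tilde v_pR^{p}+\tilde v_qR^{q}=0$. Using $(V)$ and the substitution $R(r)=\tilde\tau(r)\Phi(r)$, which is precisely the rescaling that turned \eqref{eq1.2} into \eqref{eq1.4} and which gives $\tilde v_p\tilde\tau^{p-1}=\tilde v_q\tilde\tau^{q-1}=\tilde\mu$, this collapses to the coefficient-free equation
\[
\Phi(r)^{p-1}+\Phi(r)^{q-1}=\frac{1-\tilde\sigma(r)^{2}}{\tilde\sigma(r)^{2}}=:\kappa(r).
\]
By $(H1)$ and $\sigma(0)=1$ we have $\kappa\in C^2([0,\infty))$, $\kappa\ge0$, $\kappa(0)=0$ and $\kappa(r)>0$ for $r>0$; since $G(s):=s^{p-1}+s^{q-1}$ is a strictly increasing $C^\infty$-bijection of $[0,\infty)$, $\Phi:=G^{-1}\circ\kappa$ is well defined, nonnegative, continuous and $C^\infty$ on $(0,\infty)$, with $R(r)=\tilde\tau(r)\Phi(r)>0$ for $r>0$ and $R(0)=0$, so that $W$ vanishes only at the origin.

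It remains to establish $C^2$-regularity at the origin and exponential localization. For regularity I would show $W\in C^2([0,\infty))$ with $W(0)=W''(0)=0$, so that Lemma \ref{lemma2.1} applied to $\operatorname{Re}W$ and $\operatorname{Im}W$ gives $\mathbf{u}(x)=W(|x|)\frac{x}{|x|}\in C^2(\R^3;\mathbb{C}^3)$, whence $\mathbf{u}(x,t)=e^{it}\mathbf{u}(x)\in C^2(\R^3\times\R;\mathbb{C}^3)$ is $2\pi$-periodic in $t$. The heart of the matter is to prove $\Phi\in C^2$ near $0$ with $\Phi(0)=\Phi'(0)=\Phi''(0)=0$: the Taylor inversion $G^{-1}(\kappa)=\kappa^{1/(p-1)}\big(1+O(\kappa^{(q-p)/(p-1)})\big)$ gives $\Phi=\kappa^{1/(p-1)}(1+o(1))$, while $\kappa^{1/(q-1)}=(1-\tilde\sigma)^{1/(q-1)}\cdot(\text{a }C^2\text{, nonvanishing factor near }0)$ tends to $0$ in the $C^2$-sense by $(H2)$; since $p<q$ one has $\kappa^{1/(p-1)}=\big(\kappa^{1/(q-1)}\big)^{(q-1)/(p-1)}$ with exponent $>1$, and raising a nonnegative $C^{2}$ function that tends to $0$ in the $C^2$-sense to a power $>1$ again yields a function tending to $0$ in the $C^2$-sense — the only delicate term in the second derivative, $h^{(q-1)/(p-1)-2}(h')^{2}$ with $h=\kappa^{1/(q-1)}$, is controlled by the Glaeser-type inequality $|h'|^{2}\le2(\sup|h''|)\,h$ valid for nonnegative $C^{1,1}$ functions — and multiplying through by the $C^2$ factors $\tilde\tau$ and $e^{i\theta}$ preserves this, exactly as in Step 2 of the proof of Theorem \ref{th4.1}. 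For localization, $(H3)$ with $\sigma(0)=1$ gives $\sigma(x)\to1$ as $|x|\to\infty$ and $\kappa(r)\lesssim1-\sigma(0)\sigma(x)\lesssim e^{-\gamma(q-1)|x|}$; then $\Phi^{p-1}\le\kappa$ yields $\Phi(r)\le\kappa(r)^{1/(p-1)}\lesssim e^{-\gamma\frac{q-1}{p-1}|x|}$, and with $(H4)$ (so that $\tilde\tau$ is bounded) we obtain $|\mathbf{u}(x)|=\tilde\tau(|x|)\Phi(|x|)\lesssim e^{-\gamma\frac{q-1}{p-1}|x|}$, whence $\sup_{\R^3}|\mathbf{u}(x)|e^{\gamma|x|}<\infty$.

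Finally, distinct choices of the radially symmetric $C^2$ phase $\theta$ produce the family
\[
\mathbf{u}_{\theta}(x,t)=\tilde\tau(|x|)\,\Phi(|x|)\,e^{i(t+\theta(|x|))}\frac{x}{|x|}=\mathbf{u}_{0}\big(x,\,t+\theta(|x|)\big),
\]
where $\mathbf{u}_0$ denotes the solution obtained for $\theta\equiv0$; this is an infinite-dimensional continuum of distinct $T$-periodic $\mathbb{C}^3$-valued monochromatic breathers, which proves the claim. The step I expect to be the main obstacle is the $C^2$-regularity of $\Phi$ at the origin: the near-zero behaviour of $\Phi$ is dictated by the \emph{lower} power $p-1$, whereas $(H2)$ only supplies $C^2$-flatness through the $(q-1)$-th root, and it is precisely the inequality $p<q$ together with Glaeser's estimate that bridges this gap.
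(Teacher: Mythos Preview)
Your approach is essentially the paper's: insert the monochromatic radial ansatz, reduce to an algebraic equation for the modulus, invert a strictly increasing function, and read off well-definedness, regularity at the origin, and exponential decay from $(H1)$--$(H4)$. The paper does this in a few lines, writing the equation as $(\tilde\tau(r))^{q-p}|y|^{p-1}+|y|^{q-1}=\big((2\pi/T)^2\tilde\sigma(r)^{-2}-1\big)\tilde\tau(r)^{q-1}$ and inverting the $r$-dependent map $l(z)=\tilde\tau(r)^{q-p}z^{p-1}+z^{q-1}$; your extra rescaling $R=\tilde\tau\Phi$ removes the $r$-dependence and yields the cleaner $G(s)=s^{p-1}+s^{q-1}$, a cosmetic improvement. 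Your explicit observation $\sigma(0)=1$ (a consequence of $(H2)$ that the paper uses only tacitly) and, more substantially, your introduction of the free radially symmetric $C^2$ phase $\theta$ to produce the continuum are both additions: the paper records only the sign choice $y=\pm\, l^{-1}(\cdots)$ and leaves the word ``continuum'' in the statement without justification in the proof.

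On the regularity step you go well beyond the paper, which simply asserts ``by $(H2)$ we have $y(0)=y''(0)=0$''. Your Glaeser argument correctly shows that $h^{\alpha}$, with $h:=\kappa^{1/(q-1)}$ and $\alpha=(q-1)/(p-1)>1$, tends to $0$ in the $C^2$-sense. The one point you gloss over is that $\Phi=G^{-1}(\kappa)$ is not literally $h^\alpha$ but $h^\alpha\,\Psi(h^\beta)$ for a smooth $\Psi$ with $\Psi(0)=1$ and $\beta=(q-1)(q-p)/(p-1)>0$ (this follows from the implicit equation $\Psi^{p-1}+h^\beta\Psi^{q-1}=1$); when $q-p$ is small one has $\beta<1$ and $\Psi(h^\beta)$ need not itself be $C^2$ at $0$. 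This is not fatal: differentiating the product and using the same Glaeser bound $(h')^2\le Ch$, every term in $(h^\alpha\Psi(h^\beta))''$ carries at least a factor $h^{\alpha-1}$ times a bounded quantity, and hence tends to $0$ since $\alpha>1$. So the gap is easily closed, and your argument is otherwise sound.
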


\begin{proof}
We make use of the ansatz $\mathbf{u}(x,t)=y(|x|)e^{i\frac{2\pi}{T}t}\frac{x}{|x|}$ and insert it into \eqref{eq1.1}. It shows that a $T$-periodic breather solution $\mathbf{u}(x,t)$ exists if $y:[0,\infty)\rightarrow\R$ is a $C^2$-solution of
\begin{equation}\label{eq4.6}
-\left(\frac{2\pi}{T}\right)^2\tilde{\rho}(r)+\tilde{\mu}(r)+\tilde{v}_p(r)|y(r)|^{p-1}+\tilde{v}_q(r)|y(r)|^{q-1}=0,
\end{equation}
satisfying $y(0)=y''(0)=0$ which is exponentially decays to 0 at $\infty$. Note that \eqref{eq4.6} can be rewritten as
\begin{equation*}
(\tilde{\tau}(r))^{q-p}|y(r)|^{p-1}+|y(r)|^{q-1}=\left((\frac{2\pi}{T})^2 \frac{1}{(\tilde{\sigma}(r))^2}-1\right)(\tilde{\tau}(r))^{q-1},
\end{equation*}
where we have used \eqref{eq1.5}. We define $l(z):[0,\infty)\rightarrow\R$ as
\begin{equation*}
l(z)=(\tilde{\tau}(r))^{q-p}z^{p-1}+z^{q-1}.
\end{equation*}
Then
\begin{equation*}
l(|y(r)|)=(\tilde{\tau}(r))^{q-p}|y(r)|^{p-1}+|y(r)|^{q-1}
\end{equation*}
and
\begin{equation*}
l'(|y(r)|)=(p-1)(\tilde{\tau}(r))^{q-p}|y(r)|^{p-2}+(q-1)|y(r)|^{q-2}>0
\end{equation*}
for $1<p<q$. Thus we obtain that $l$ has an inverse $l^{-1}$, which implies that $y$ can be rewritten as
\begin{equation*}
y(r)=\pm l^{-1}\left[\left((\frac{2\pi}{T})^2\frac{1}{(\tilde{\sigma}(r))^2}-1\right)(\tilde{\tau}(r))^{q-1}\right].
\end{equation*}
The assumption $(H1)$ shows that $y$ is well-defined. In view of $(H3)$ and $(H4)$, $y(r)$ is exponentially decaying as $r\rightarrow\infty$ and by $(H2)$ we have $y(0)=y''(0)=0$. Thus we obtain that $\mathbf{u}(x,t)$ is a classical solution of \eqref{eq1.1} on $\R^3\times\R$. The proof of this theorem is completed.
\end{proof}

\section{Conclusion}
In this work, a (3+1)-dimensional radially symmetric curl-curl wave equation with double power nonlinearity is investigated. In view of the radially symmetric assumptions on the coefficients, the time periodic solutions and spatially localized real-valued solutions of equation \eqref{eq1.1} are constructed. Thus, the (3+1)-dimensional wave equation reduces to a family of ordinary differential equations. By means of qualitative theory of ODEs, we explored the existence and exponential decay properties of real-valued breather solutions of equation \eqref{eq1.1}. Furthermore, we showed that the solution can generate a continuum of phase-shifted breathers. In addition, we have also constructed $T$-periodic $\mathbb{C}^3$-valued monochromatic breather solutions of the type $\mathbf{u}(x,t)=e^{i\frac{2\pi}{T}t}\mathbf{u}(x)$ of equation \eqref{eq1.1}. We hope that our results can help enrich dynamic behaviors of the breathers under the context of the curl-curl nonlinear wave equations.

\begin{flushleft}
\textbf{Data Availability} This work does not have any experimental data.
\end{flushleft}

\begin{flushleft}
\textbf{Conflict of interest} We have no competing interests.
\end{flushleft}

\noindent \textbf{Acknowledgments} The authors sincerely thank the referees for very careful reading and many valuable comments, which led to much improvement in earlier version of this paper. This work is partially supported by NSFC Grants (12225103, 12071065 and 11871140) and the National Key Research and Development Program of China (2020YFA0713602 and 2020YFC1808301).\\

%

\bibliographystyle{elsarticle-num}
\bibliography{<your-bib-database>}

\begin{thebibliography}{00}
\bibitem{1973} M.J. Ablowitz, D.J. Kaup, A.C. Newell and H. Segur, Method for solving the sine-Gordon equation, Phys. Rev. Lett. 30 (1973) 1262-1264.
\bibitem{20062} G.T. Adamashvili and D.J. Kaup, Optical breathers in nonlinear anisotropic and dispersive media, Phys. Rev. E. 73 (2006), no. 6, 066613.
\bibitem{AL} S. Alama and Y. Li, Existence of solutions for semilinear elliptic equations with indefinite linear part, J. Differential Equations, 96 (1992) 89-115.
\bibitem{ABD} A. Azzollini, V. Benci, T. D'Aprile and D. Fortunato, Existence of static solutions of the semilinear Maxwell equations, Ric. Mat. 55 (2006) 283-297.
\bibitem{2010} D. Bambusi and T. Penati, Continuous approximation of breathers in one- and two-dimensional DNLS lattices, Nonlinearity 23 (2010) 143-157.
\bibitem{20162} T. Bartsch, T. Dohnal, M. Plum and W. Reichel, Ground states of a nonlinear curl-curl problem in cylindrically symmetric media, NoDEA Nonlinear Diff. Equ. Appl. 23 (2016), no. 5, Art. 52, 34 pp.
\bibitem{1994} B. Birnir, H.P. McKean and A. Weinstein, The rigidity of sine-Gordon breathers, Comm. Pure Appl. Math. 47 (1994) 1043-1051.
\bibitem{2011} C. Blank, M. Chirilus-Bruckner, V. Lescarret and G. Schneider, Breather solutions in periodic media, Comm. Math. Phys. 302 (2011) 815-841.
\bibitem{2007} K. Busch, G. von Freyman, S. Linden, S.F. Mingaleev, L. Theshelashvili and M. Wegener, Periodic nanostructures for photonics, Phys. Rep. 444 (2007) 101-202.
\bibitem{CW} M. Chirilus-Bruckner and C.E. Wayne, Inverse spectral theory for uniformly open gaps in a weighted Sturm-Liouville problem, J. Math. Anal. Appl. 427 (2015) 1168-1189.
\bibitem{1993} J. Denzler, Nonpersistence of breather families for the perturbed sine Gordon equation, Commun. Math. Phys. 158 (1993) 397-430.
\bibitem{NSci} M. Haragus and D.E. Pelinovsky, Linear instability of breathers for the focusing nonlinear Schr\"odinger equation, J. Nonlinear Sci. 32 (2022), no. 5, Paper No. 66, 40 pp.
\bibitem{2019} A. Hirsch and W. Reichel, Real-valued, time-periodic localized weak solutions for a semilinear wave equation with periodic potentials, Nonlinearity 32 (2019) 1408-1439.
\bibitem{2009} G. James, B. S\'anchez-Rey and J. Cuevas, Breathers in inhomogeneous nonlinear lattices: an analysis via center manifold reduction, Rev. Math. Phys. 21 (2009) 1-59.
\bibitem{JNSci} N.I. Karachalios, B. S\'anchez-Rey, P.G. Kevrekidis and J. Cuevas, Breathers for the discrete nonlinear schr\"odinger equation with nonlinear hopping, J. Nonlinear Sci. 23 (2013) 205-239.
\bibitem{CNSNS} I.K. Kikot, N.B. Rayzan, M. Kovaleva and Y. Starosvetsky, Discrete breathers and discrete oscillating kink solution in the mass-in-mass chain in the state of acoustic vacuum, Commun. Nonlinear Sci. Numer. Simul. 107 (2022), Paper No. 106020, 12 pp.
\bibitem{CNSN} Q. Li, W. Shan, P. Wang and H. Cui, Breather, lump and N-soliton wave solutions of the (2+1)-dimensional coupled nonlinear partial differential equation with variable coefficients, Commun. Nonlinear Sci. Numer. Simul. 106 (2022), Paper No. 106098, 12 pp.
\bibitem{19942} R.S. MacKay and S. Aubry, Proof of existence of breathers for time-reversible or Hamiltonian networks of weakly coupled oscillators, Nonlinearity 7 (1994) 1623-1643.
\bibitem{2021} R. Mandel and D. Scheider, Variational methods for breather solutions of nonlinear wave equations, Nonlinearity 34 (2021) 3618-3640.
\bibitem{M} J. Mederski, Ground states of time-harmonic semilinear Maxwell equations in $R^3$ with vanishing permittivity, Arch. Ration. Mech. Anal. 218 (2015) 825-861.
\bibitem{MMM} J. Mederski, The Brezis-Nirenberg problem for the curl-curl operator, J. Funct. Anal. 274 (2018) 1345-1380.
\bibitem{2005} A. Pankov, Periodic nonlinear Schr\"odinger equation with application to photonic crystals, Milan J. Math. 73 (2005) 259-287.
\bibitem{2006} A. Pankov, Gap solitons in periodic discrete nonlinear Schr\"odinger equations, Nonlinearity 19 (2006) 27-40.
\bibitem{PS} D.E. Pelinovsky, G. Simpson and M.I. Weinstein, Polychromatic solitary waves in a periodic and nonlinear Maxwell system, SIAM J. Appl. Dyn. Syst. 11 (2012) 478-506.
\bibitem{2016} M. Plum and W. Reichel, A breather construction for a semilinear curl-curl wave equation with radially symmetric coefficients, J. Elliptic Parabol. Equ. 2 (2016) 371-387.
\bibitem{2020} D. Scheider, Breather solutions of the cubic Klein-Gordon equation, Nonlinearity 33 (2020) 7140-7166.
\bibitem{shihp} H. Shi and Y. Zhang, Existence results of solitons in discrete non-linear Schr\"odinger equations, European J. Appl. Math. 27 (2016) 726-737.
\bibitem{S} W.A. Strauss, Existence of solitary waves in higher dimensions, Comm. Math. Phys. 55 (1977) 149-162.




\end{thebibliography}

\section*{References}

\end{document}